\newcommand{\blue}[1]{\begin{color}{blue}#1\end{color}}
\newcommand{\red}[1]{\begin{color}{red}#1\end{color}}
\newcommand{\green}[1]{\begin{color}{green}#1\end{color}}
\def\cA{{\cal A}}
\def\cB{{\cal B}}
\def\cL{{\cal L}}
\def\cI{{\cal I}}
\def\cS{{\cal S}}
\def\cP{{\cal P}}
\def\cD{{\cal D}}
\def\cO{{\cal O}}
\def\cQ{{\cal Q}}
\def\cT{{\cal T}}
\def\cM{{\cal M}}
\def\cU{{\cal U}}
\def\cV{{\cal V}}
\def\norm#1{\|#1 \|}
\def\inprod#1#2{\langle#1,\,#2 \rangle}
\def\vec{{\rm {\bf vec}}}
\def\Range{\textup{Ran}}
\newtheorem{theorem}{Theorem}
\newtheorem{lemma}{Lemma}
\newtheorem{remark}{Remark}
\newtheorem{proposition}{Proposition}
\def\mc{\multicolumn}
\begin{document}

\title{\bf On the efficient computation of a generalized Jacobian  of the projector over the Birkhoff polytope}
\author{Xudong Li\thanks{Department of Operations Research and Financial Engineering, Princeton University, Sherrerd Hall 116, Princeton, NJ 08544 ({\tt xudongl@princeton.edu}).}, \;
Defeng Sun\thanks{Department of Applied Mathematics, The Hong Kong Polytechnic University, Hung Hom,
 Hong Kong ({\tt defeng.sun@polyu.edu.hk}. On leave from Department  of  Mathematics,   National University of Singapore).} \  and \
Kim-Chuan Toh\thanks{Department of Mathematics and Institute of Operations Research and Analytics, National University of Singapore, 10 Lower Kent Ridge Road, Singapore
({\tt mattohkc@nus.edu.sg}).  }
}

\date{April 18, 2018}
\maketitle
\begin{abstract}
  We derive an explicit formula, as well as  an efficient procedure, for constructing
  a generalized  Jacobian for the projector of a given square matrix  onto the Birkhoff polytope, i.e.,
  the set of doubly stochastic matrices. To guarantee the high efficiency of our procedure, a   semismooth Newton method for
  solving the dual of the projection problem is proposed and efficiently implemented. Extensive numerical experiments
  are presented to demonstrate the merits and effectiveness of our method by comparing its performance against other powerful solvers such as the commercial software Gurobi and the academic code
  PPROJ [{\sc Hager and Zhang}, SIAM Journal on Optimization, 26 (2016), pp.~1773--1798].
  In particular, our algorithm is able to solve the projection problem with
  over one billion variables and nonnegative constraints to a very high accuracy in less than 15 minutes on a modest desktop computer.
  {More importantly,  based on our  efficient computation of the projections and their generalized Jacobians, we can
   design} a highly efficient augmented Lagrangian method (ALM) for
  solving a class of convex quadratic programming (QP) problems constrained by the Birkhoff polytope.
 The resulted  ALM   is demonstrated to be much more efficient than Gurobi
  in solving a collection of
  QP problems arising from the relaxation of quadratic assignment problems.
\end{abstract}
\noindent
\textbf{Keywords:}
Doubly stochastic matrix, semismoothness, Newton's method, generalized Jacobian
\textbf{AMS subject classifications:} 90C06, 90C20, 90C25, 65F10.

\section{Introduction}

The Birkhoff polytope is  the set of $n\times n$ doubly stochastic  matrices {defined by}
\[\mathfrak{B}_n := \{X\in\Re^{n\times n}\mid Xe = e,\, X^T e = e, X\ge 0\},\]
where $e\in\Re^n$ is the vector of all ones {and $X \ge 0$ means that all the elements of $X$ are
nonnegative}.
In this paper, we
focus on the  problem of projecting a matrix $G\in \Re^{n\times n}$ onto the  Birkhoff polytope $\mathfrak{B}_n$, i.e., solving  the following special  convex quadratic programming (QP) problem
\begin{equation} \label{eq:Proj_Birkhoof}
   \min \Big\{\frac{1}{2}\norm{X- G}^2
 \mid  X \in \mathfrak{B}_n \Big\},
\end{equation}
where $\norm{\cdot}$ denotes the Frobenius norm.
The optimal solution of \eqref{eq:Proj_Birkhoof}, i.e., the Euclidean projection of $G$ onto  $\mathfrak{B}_n$,
is denoted by $\Pi_{\mathfrak{B}_n}(G)$.

The Birkhoff polytope has long been an important object in statistics, combinatorics, {physics} and optimization. As the convex hull of the set of permutation matrices \cite{Birkhoof1946three,Neumann1953certain}, the Birkhoff polytope has frequently been used to derive relaxations of nonconvex  optimization problems involving permutations, such as the quadratic assignment problems {\cite{jiang2016lpnorm}} and the
seriation problems {\cite{fogel2013convex,lim2014beyond}}. Very often the algorithms that are designed to solve these relaxed problems
need to compute the projection of matrices onto the polytope $\mathfrak{B}_n$ {\cite{fogel2013convex,jiang2016lpnorm}}.
On the other hand, the availability of a fast solver for computing
$\Pi_{\mathfrak{B}_n}(\cdot)$ can also influence how one would design an algorithm
to solve the relaxed problems.
As we shall demonstrate later, indeed one can design a highly
efficient algorithm to solve QP problems involving Birkhoff polytope constraints
if a fast solver for computing $\Pi_{\mathfrak{B}_n}(\cdot)$ {and its generalized Jacobian} is readily available.


Let $D$ be a  nonempty  polyhedral convex set. Besides the computation of the {Euclidean} projector $\Pi_D(\cdot)$,  the  differential properties of the projector have long been recognized to be  important  {in nonsmooth analysis and algorithmic design}.
In \cite{Haraux1977how}, Haraux showed that the projector onto a polyhedral convex set must be directionally differentiable. Pang \cite{Pang1990Newton}, inspired by an unpublished report of Robinson \cite{Robinson1985implicit}, derived an explicit formula for the directional {derivative} and discussed
the {Fr{\'e}ch{e}t} differentiability of the projector.
By using the piecewise linear structure of   $\Pi_D(\cdot)$, one may further use the results of Pang and Ralph
\cite{PRalph1996}  to characterize the
B-subdifferential and the corresponding Clarke generalized Jacobian \cite{clarke1990optimization} of the projector.
However, for an arbitrary polyhedral set $D$, the calculations of these
generalized Jacobians are generally very difficult to accomplish numerically, if feasible at all.
In order to circumvent this difficulty, Han and Sun in \cite{han1997newton} proposed a special multi-valued mapping as a {more} tractable replacement for the generalized Jacobian and used it in the design of the generalized Newton and quasi-Newton methods for solving a class of piecewise  smooth equations.
 The idea of    getting  an element from {the} aforementioned multi-valued mapping in \cite{han1997newton}
 is to find certain dual multipliers of the projection problem together with a corresponding set of linearly independent active constraints.
Since the linear independence checking   can be costly, in particular when  the dimension of the underlying  projection problem is large,
in this paper, we {aim at introducing} a technique to  avoid  this checking and provide an efficient computation of a generalized Jacobian in the sense of \cite{han1997newton} for the Euclidean projector over the polyhedral convex set with an emphasis on the Birkhoff polytope.
 We achieve this goal by deriving an explicit formula for constructing a special generalized Jacobian in the sense of \cite{han1997newton}.
In addition, based on the special structure of the Birkhoff polytope, we further simplify the formula and discuss efficient implementations for its calculation.
We shall emphasize here that, in contrast to the previous work done in  \cite{han1997newton} and as a surprising result, our specially constructed Jacobian needs neither the knowledge of the
dual multipliers associated with the projection problem nor the set of corresponding linearly independent active constraints.

As one can see later, the computation of the {Euclidean} projector $\Pi_{D}(\cdot)$ is one of
 the key steps in our construction of the aforementioned special generalized Jacobian.
 Hence, its 
efficiency
  is crucial to our construction. As a simple yet fundamental convex quadratic programming problem, various well developed algorithms have been used for computing the projection onto a polyhedral convex set such as the state-of-the-art interior-point based commercial solvers Gurobi \cite{Gurobi2016} and CPLEX\footnote{\url{https://www-01.ibm.com/software/commerce/optimization/cplex-optimizer/index.html}}. Recently, Hager and Zhang \cite{zhang2016projection} proposed to compute the projector through the dual approach by combining the sparse reconstruction by separable approximation (SpaRSA) \cite{wright2009sparse} and the dual active set algorithm. An efficient implementation called PPROJ is also provided in \cite{zhang2016projection} and the comparisons between PPROJ and CPLEX indicate that PPROJ is robust,
accurate and fast. In fact, the dual approach for solving Euclidean projection problems has been extensively studied in the literature. For example, both the dual quasi-Newton method \cite{Malic2004dual} and the dual semismooth Newton method \cite{qi2006quadratically} have been developed to compute the Euclidean projector onto the intersection of an affine subspace and a closed convex cone.
{Another popular method for computing
the projection over the intersection of an affine subspace and a closed convex cone is the
{alternating projections method with Dykstra's correction \cite{dykstra1983algorithm}}
that was
 proposed in \cite{higham2002computing}. It has been shown in \cite[Theorem 5.1]{Malic2004dual} that the {alternating projections method with Dykstra's correction \cite{dykstra1983algorithm}} is a dual gradient method  with constant step size. As can be observed from the numerical comparison in \cite{qi2006quadratically}, the semismooth Newton method outperformed the
 quasi-Newton and Dykstra's methods by a significant margin.}


{As already mentioned in the second paragraph above, the projection onto the Birkhoff polytope
has important applications in different areas. It is also by itself a mathematically elegant problem to study. Thus
in this paper,  we shall focus on the
case where the polyhedral convex set $D$ is chosen to be the Birkhoff polytope $\mathfrak{B}_n$.
Due to the elegant structure of $\mathfrak{B}_n$,
we are able to derive a highly efficient procedure to compute a special generalized Jacobian of $\Pi_{\mathfrak{B}_n}$ by leveraging on its structure.}
As a crucial step in our procedure, we
choose to use the semismooth Newton method for computing the projector $\Pi_{\mathfrak{B}_n}(\cdot)$ via solving the dual of the projection problem \eqref{eq:Proj_Birkhoof} and provide a highly efficient implementation. Extensive numerical experiments are presented to demonstrate the merits and effectiveness of our method by comparing its performance against other solvers such as Gurobi and  PPROJ.
 In particular, our algorithm is able to solve a projection problem over the Birkhoff polytope with
  over one billion variables and nonnegative constraints to a very high accuracy  in less than 15 minutes
  on a modest desktop computer.
  In order to further demonstrate the importance of our procedure, we  also propose a highly efficient augmented Lagrangian method (ALM) for  solving a class of convex QP problems with Birkhoff polytope constraints.
  Our ALM  is demonstrated to be much more efficient than Gurobi in solving a collection of QP problems arising from the relaxation of quadratic assignment problems.

The remaining parts of this paper are organized as follows. The next section is devoted to studying the generalized
Jacobians of the projector onto a general polyhedral convex set. In Section \ref{sec:projBP}, a semismooth Newton method is designed for projecting a matrix onto the Birkhoff {polytope}. Then, a generalized Jacobian of the projector at the given matrix is computed. Efficient implementations of these steps are discussed.  In Section \ref{sec:QP_BP}, we show how the generalized Jacobian obtained in Section \ref{sec:projBP} can be employed in the design of
a highly efficient augmented Lagrangian method for solving convex quadratic programming problems with Birkhoff polytope constraints. In Section \ref{sec:numer},
we conduct numerical experiments to evaluate the performance of our algorithms against Gurobi and PROJ
for computing the projection onto the Birkhoff polytope.
The advantage of incorporating the second order (generalized Hessian) information
into the design of an algorithm for solving
 convex quadratic programming problems with Birkhoff polytope constraints
is also demonstrated.
We conclude our paper in the final section.

Before we move to the next section, here we list some {notation} to be used later in this paper. For any given $X\in\Re^{m\times n}$, we define its associated  vector ${\vec(X)}\in\Re^{mn}$  by
\[\vec(X):=[x_{11},\ldots,x_{m1}, x_{12},\ldots, x_{m2},\ldots.
x_{1n},\ldots,x_{mn}].\]
For any given vector $y\in\Re^{n}$, we denote by ${\rm Diag}(y)$ the diagonal matrix whose $i$th diagonal element is given by $y_i$. For any given matrix $A\in\Re^{m\times n}$, we use $\Range(A)$ and ${\rm Null}(A)$ to denote the range space and the null space of $A$, {respectively}. Similar {notation} is used when $A$ is replaced by a linear operator
$\cA$. We use $I_n$ to denote the $n$ by $n$ identity matrix in $\Re^{n\times n}$ and  $N^{\dagger}$ to denote the Moore-Penrose pseudo-inverse of a given matrix $N\in\Re^{m\times n}$.

\section{Generalized Jacobians of the projector over polyhedral convex sets}

In this section, we study the variational properties of the projection mapping $\Pi_{D}(\cdot)$ for a  nonempty polyhedral convex set $D \subseteq \Re^{n}$ expressed in the following form
\begin{equation}
\label{setD}
D:=\{x\in\Re^n \mid A x \ge b, \, Bx = d\},
\end{equation}
where $A\in\Re^{m\times n}, B\in\Re^{p\times n}$ and $b\in\Re^m, d\in\Re^{p}$ are given data. Without  loss of generality, we assume that ${\rm rank}(B) = p$, $p\le n$.

Given $x\in\Re^n$, from the representation of $D$ in \eqref{setD}, we know that there exist multipliers $\lambda\in \Re^{m}$ and $\mu \in \Re^p$ such that
\begin{equation}\label{eq:KKTprojD}
\left\{
\begin{aligned}
&\Pi_{D}(x) - x + A^T\lambda + B^T\mu = 0,\\[5pt]
&A\Pi_{D}(x) - b \ge 0, \quad B\Pi_{D}(x) - d = 0, \\[5pt]
& \lambda \le 0, \quad \lambda^T(A\Pi_{D}(x) - b) = 0.
\end{aligned}
\right.
\end{equation}
Let $M(x)$ be the set of multipliers associated with $x$, i.e.,
\[M(x):=\{(\lambda,\mu)\in\Re^m\times\Re^p\mid (x,\lambda, \mu)\; {\rm satisfies}\; \eqref{eq:KKTprojD}\}.\]
Since $M(x)$ is a nonempty polyhedral convex set containing no lines, it has at least one extreme point denoted   as $(\bar \lambda, \bar \mu)$ \cite[Corollary 18.5.3]{rockafellar1970convex}.
Denote
\begin{equation}\label{eq:Ix}
I(x): = {\{i \in \{ 1,\ldots,m \} \mid A_i\Pi_D(x) = b_i\}},
\end{equation}
where $A_i$ is the $i$th row of the matrix $A$.
Define a collection of index sets:
\begin{equation*}
\begin{aligned}
\cD(x): =  \{\; & K\subseteq \{1,\ldots,m\} \mid \exists\, (\lambda,\mu)\in M(x) \; {\rm s.t.}\; {\rm supp}(\lambda) \subseteq K \subseteq I(x),\\[5pt]
\;&  [A_K^T\ B^T]\;\mbox{\rm is of  full column rank}
\},
\end{aligned}
\end{equation*}
where ${\rm supp}(\lambda)$ denotes the support of  $\lambda$, i.e., the set of indices $i$ such that $\lambda_i \neq 0$ and $A_K$ is the matrix consisting of the rows of $A$, indexed by $K$. As is already noted in \cite{han1997newton}, the set $\cD(x)$ is nonempty due to the existence of the extreme point
$(\bar \lambda, \bar \mu)$ of $M(x)$. Since it is usually difficult to
 calculate the B-subdifferential $\partial_B \Pi_{D}(x)$ or the Clarke generalized Jacobian $\partial \Pi_{D}(x)$ for a general polyhedral convex set $D$ at a given point $x$,
 Han and Sun in \cite{han1997newton} introduced the following multi-valued mapping
 $\cP:\Re^n\rightrightarrows\Re^{n\times n}$ as a computable  replacement for $\partial_B\Pi_{D}(\cdot)$,
 namely,
\begin{equation}
\label{eq:genJacobianPx}
\cP(x) := \left\{
P\in\Re^{n\times n}\mid P = I_n - [A_K^T\ B^T]
\left( \left[   \begin{array}{c}
A_K \\
B \\
\end{array}
\right]
[A_K^T\ B^T]\right)^{-1}\left[\begin{array}{c}
A_K \\
B \\
\end{array}
\right],\, K\in\cD(x)
\right\}.
\end{equation}
The mapping $\cP$ has a few important properties \cite{han1997newton}{, which} are summarized in the following proposition.
\begin{proposition}\label{thm:HanSun1997}
	For any $x\in \Re^n$, there exists a neighborhood $U$ of $x$ such that
	\[\cD(y)\subseteq \cD(x),\quad \cP(y)\subseteq \cP(x), \quad \forall y\in U.\]
	If $\cD(y)\subseteq \cD(x)$, it holds that
	\[\Pi_D(y) = \Pi_{D}(x) + P(y-x),\quad {\forall P\in\cP(y)}.\]
	Thus, $\partial_B \Pi_{D}(x) \subseteq \cP(x)$.
\end{proposition}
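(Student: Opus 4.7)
My plan rests on a single key observation: once an index set $K \in \cD(y)$ is fixed, the projection $\Pi_D(y)$ admits an explicit affine representation. Setting $N := [A_K^T\ B^T]$, which has full column rank by definition of $\cD(y)$, the stationarity in \eqref{eq:KKTprojD} combined with the active-set equalities $A_K\Pi_D(y)=b_K$ and $B\Pi_D(y)=d$ lets me solve uniquely for the multipliers and arrive at
\[
\Pi_D(y) \;=\; Py + c_K, \qquad P = I_n - N(N^TN)^{-1}N^T,
\]
where $P$ is exactly the projector matrix attached to $K$ in the definition \eqref{eq:genJacobianPx} of $\cP(y)$ and $c_K$ is a constant depending only on $K$, $A$, $B$, $b$, $d$.

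Part 2 is then immediate: whenever $K \in \cD(y) \subseteq \cD(x)$, the same affine formula applies at both $y$ and $x$ with identical $P$ and $c_K$, so subtracting gives $\Pi_D(y)-\Pi_D(x)=P(y-x)$ for every $P \in \cP(y)$.

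For Part 1 I would proceed by a stability argument for the KKT system. Nonexpansiveness of $\Pi_D$ yields $\Pi_D(y)\to\Pi_D(x)$ as $y\to x$, which preserves the strict inactivity $A_i\Pi_D(x)>b_i$ for $i\notin I(x)$ throughout a small neighborhood $U$; hence $I(y)\subseteq I(x)$ for every $y\in U$. Fix any $K\in\cD(y)$; the corresponding multipliers $(\lambda_K^y,\mu^y)$ are uniquely determined by a linear system whose matrix is $N^TN$ and whose right-hand side is affine in $y$, so they depend continuously on $y$. Their limit at $x$, extended by zero outside $K$, furnishes a candidate pair $(\lambda^x,\mu^x)$ for which stationarity at $x$ is obtained by passing to the limit, nonpositivity $\lambda^x\le 0$ is inherited from $\lambda_K^y\le 0$, and complementarity holds automatically because ${\rm supp}(\lambda^x)\subseteq K\subseteq I(x)$. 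Hence $(\lambda^x,\mu^x)\in M(x)$ and $K\in\cD(x)$; the inclusion $\cP(y)\subseteq\cP(x)$ then follows because the formula defining $\cP(\cdot)$ depends only on $K$.

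Finally, Part 3 would follow by combining Parts 1--2 with the piecewise affine structure of $\Pi_D$. Since $\cD(x)$ draws from the finite power set of $\{1,\ldots,m\}$, $\cP(x)$ is a finite set and $\Pi_D$ is a continuous selection from finitely many affine maps on $\Re^n$, so at every point of differentiability $y$ the Jacobian $J\Pi_D(y)$ must coincide with some element of $\cP(y)$. Given $V\in\partial_B\Pi_D(x)$ written as $V=\lim_k J\Pi_D(y_k)$ with $y_k\to x$ and $\Pi_D$ differentiable at each $y_k$, Part 1 yields $J\Pi_D(y_k)\in\cP(y_k)\subseteq\cP(x)$ for all large $k$; finiteness of $\cP(x)$ then forces the sequence to be eventually constant equal to $V$, whence $V\in\cP(x)$. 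The main technical obstacle I anticipate is the stability argument in Part 1: one must verify that the limiting multipliers still obey all of the sign and complementarity conditions of \eqref{eq:KKTprojD}, and it is precisely the support restriction ${\rm supp}(\lambda)\subseteq K\subseteq I(x)$ in the definition of $\cD(x)$ that trivializes complementarity in the limit and leaves only the easy closed conditions to check by continuity.
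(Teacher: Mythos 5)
The paper never proves this proposition; it is quoted from Han and Sun \cite{han1997newton}, so there is no internal argument to compare against, and what you have written is in effect a self-contained reconstruction of the cited result. In substance it is correct. The affine representation $\Pi_D(y)=Py+c_K$ for any $K\in\cD(y)$ (stationarity with multipliers supported in $K$, the equalities $A_K\Pi_D(y)=b_K$ and $B\Pi_D(y)=d$, and invertibility of $N^TN$ thanks to the full column rank of $N=[A_K^T\ B^T]$) is exactly the right engine, and it makes Part 2 a one-line subtraction. Part 3 is also fine, modulo one detail you gloss over: from Parts 1--2 the directional derivative at a differentiability point $y$ agrees, for each direction $h$, with $Ph$ for some $P\in\cP(y)$; to upgrade this to ``the Jacobian equals a single element of $\cP(y)$'' you should note that the agreement sets are finitely many linear subspaces (kernels of $J\Pi_D(y)-P$) covering $\Re^n$, so one of them is all of $\Re^n$. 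Membership of a limit of such Jacobians in the finite, hence closed, set $\cP(x)$ then gives $\partial_B\Pi_D(x)\subseteq\cP(x)$.

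The one place that needs tightening is Part 1. As phrased (``fix any $K\in\cD(y)$ \dots\ their limit at $x$''), the argument reads as if it treats a single nearby $y$, and for a single $y$ the sign condition at $x$ does not follow: the multiplier vector evaluated at $x$ could acquire a slightly positive $\lambda$-component even though its value at $y$ is nonpositive. The correct version is the sequential one you hint at: for each fixed $K\notin\cD(x)$, suppose there were $y_k\to x$ with $K\in\cD(y_k)$; the uniquely determined multipliers supported in $K$ are an affine function of $y_k$ on this set, so they converge, stationarity and $\lambda\le 0$ pass to the limit, $K\subseteq I(y_k)\subseteq I(x)$ for large $k$, complementarity is automatic from $K\subseteq I(x)$, whence $K\in\cD(x)$ --- a contradiction. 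This yields, for each such $K$, a neighborhood of $x$ excluding it, and you must then invoke the finiteness of the collection of index sets $K\subseteq\{1,\ldots,m\}$ to intersect these finitely many neighborhoods (together with the one on which $I(y)\subseteq I(x)$) into the single $U$ asserted in the statement; you appeal to finiteness in Part 3 but it is equally indispensable here. With that repair, and the covering remark above, your proof is complete.
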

Note that even {with formula \eqref{eq:genJacobianPx}}, for a given point $x\in \Re^n$, it is still not easy to find an
element in $\cP(x)$ as one needs to find a suitable index $K \in \cD(x)$ corresponding
to some multiplier $(\lambda, \mu)\in M(x)$.
A key contribution  made in this paper is that we are able to construct
a matrix $P_0\in\Re^{n\times n}$ such that $P_0\in \cP(x)$ without knowing the index set $K$ and its corresponding multipliers.
In addition, we show how to efficiently compute the matrix
$P_0$ when the polyhedral set $D$ possesses certain special structures. We shall emphasize here that the efficient computation of $P_0$ is crucial in the design of various second order algorithms
for solving optimization problems involving the polyhedral constraint $x\in D$.

We  present here a very useful lemma which will be used extensively in our later discussions.
\begin{lemma}\label{lemma:MPInveqProj}
	Let $H\in\Re^{m\times n}$ be a given  matrix and $\widehat H \in \Re^{m_1\times n}$ be a full row rank matrix satisfying ${\rm Null}(\widehat H)= {\rm Null}(H)$. Then  it holds that
	\[H^T(H H^T)^{\dagger}H = \widehat{H}^T(\widehat{H}\widehat{H}^T)^{-1}
	\widehat{H}.\]
\end{lemma}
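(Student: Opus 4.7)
The plan is to exhibit both sides of the claimed identity as the orthogonal projector onto the same subspace of $\Re^n$, and then to invoke the uniqueness of the orthogonal projector onto a given subspace. The relevant subspace will be the row space of $H$, which coincides with the row space of $\widehat H$ thanks to the hypothesis ${\rm Null}(H) = {\rm Null}(\widehat H)$.

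First, I would show that $H^T(HH^T)^{\dagger}H$ is the orthogonal projector onto $\Range(H^T)$. The cleanest route is via the singular value decomposition $H = U\Sigma V^T$ with $U\in\Re^{m\times m}$ and $V\in\Re^{n\times n}$ orthogonal. Using $(HH^T)^{\dagger} = U(\Sigma\Sigma^T)^{\dagger}U^T$, the expression collapses to $V\,\Sigma^T(\Sigma\Sigma^T)^{\dagger}\Sigma\, V^T$, and the middle factor reduces to a diagonal matrix with ones in the positions corresponding to the nonzero singular values of $H$ and zeros elsewhere. The resulting matrix is precisely the orthogonal projector onto the span of the right singular vectors with nonzero singular values, namely $\Range(H^T)$.

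Second, I would observe that since $\widehat H$ has full row rank, $\widehat H\widehat H^T$ is invertible, and $\widehat H^T(\widehat H\widehat H^T)^{-1}\widehat H$ is the classical formula for the orthogonal projector onto $\Range(\widehat H^T)$. This can either be verified directly (the matrix is symmetric, idempotent, and fixes every vector of the form $\widehat H^T y$) or treated as the special case of the first step in which the Moore--Penrose inverse reduces to an ordinary inverse.

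Finally, I would invoke the standard identity $\Range(A^T) = ({\rm Null}(A))^{\perp}$, valid for any real matrix $A$. Combined with the hypothesis ${\rm Null}(H) = {\rm Null}(\widehat H)$, this yields $\Range(H^T) = \Range(\widehat H^T)$, and uniqueness of the orthogonal projector onto a fixed subspace finishes the proof. I do not anticipate any substantive obstacle: the only care required is to avoid assuming that $HH^T$ is invertible and to handle the pseudo-inverse correctly, for which the SVD is the most convenient tool.
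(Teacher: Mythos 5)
Your proposal is correct and follows essentially the same route as the paper: both identify $H^T(HH^T)^{\dagger}H$ (via the SVD) and $\widehat H^T(\widehat H\widehat H^T)^{-1}\widehat H$ (via the classical full-row-rank formula) as orthogonal projectors onto $\Range(H^T)$ and $\Range(\widehat H^T)$, respectively, and then use $\Range(A^T)={\rm Null}(A)^{\perp}$ together with the hypothesis to conclude these subspaces coincide. No gaps.
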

\begin{proof}
	By the singular value decomposition   of $H$ and the definition of the Moore-Penrose pseudo-inverse of $HH^T$, we can obtain through some simple calculations that
	\begin{equation}\label{eq:MPinvd}
	H^T(H H^T)^{\dagger}H\, d = \Pi_{\Range(H^T)}(d),\quad  \forall d\in\Re^n.
	\end{equation}
	Meanwhile, since $\widehat H$ is of  full row rank, we know (e.g., see  \cite[Page 46 (6.13)]{trefethen1997numerical}) that
	\begin{equation}\label{eq:invd}
	\widehat{H}^T(\widehat{H}\widehat{H}^T)^{-1}
	\widehat{H} \, d = \Pi_{\Range(\widehat H^T)}(d),\quad  \forall d\in\Re^n.
	\end{equation}
	Equations \eqref{eq:MPinvd} and \eqref{eq:invd}, together with the fact that
	\[
	{\Range}(H^T) = {\rm Null}(H)^\perp = {\rm Null}(\widehat H)^\perp = {\Range}(\widehat H^T),
	\]
	imply the desired result.
\end{proof}

\begin{theorem}
	\label{thm:elementinJocobianPx}
	For any given $x\in\Re^n$, let $I(x)$ be given in \eqref{eq:Ix}. Denote
	\begin{equation}\label{eq:P0Jacobian}
	P_{0} := I_n - [A_{I(x)}^T\ B^T]
	\left( \left[   \begin{array}{c}
	A_{I(x)} \\
	B \\
	\end{array}
	\right]
	[A_{I(x)}^T\ B^T]\right)^{\dagger}\left[\begin{array}{c}
	A_{I(x)} \\
	B \\
	\end{array}
	\right].
	\end{equation}
	Then, $P_0 \in \cP(x)$.
\end{theorem}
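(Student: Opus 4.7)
The plan is to produce a specific index set $K \in \cD(x)$ for which the matrix produced by formula \eqref{eq:genJacobianPx} coincides with $P_0$. Lemma \ref{lemma:MPInveqProj} will serve as the bridge between the Moore--Penrose pseudo-inverse appearing in \eqref{eq:P0Jacobian} and the ordinary inverse appearing in \eqref{eq:genJacobianPx}.

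First I would invoke the fact, already noted in the discussion preceding \eqref{eq:genJacobianPx}, that $\cD(x)$ is nonempty: pick an extreme point $(\bar\lambda,\bar\mu)$ of $M(x)$ together with some $K_0 \in \cD(x)$ witnessing ${\rm supp}(\bar\lambda) \subseteq K_0 \subseteq I(x)$ with $[A_{K_0}^T\ B^T]$ of full column rank. Then I would enlarge $K_0$ greedily: so long as some $i \in I(x)\setminus K$ has $A_i^T$ outside the column span of $[A_K^T\ B^T]$, append $i$ to $K$. The column rank strictly increases at every step, so the procedure terminates at a maximal $K$ satisfying ${\rm supp}(\bar\lambda) \subseteq K_0 \subseteq K \subseteq I(x)$ with $[A_K^T\ B^T]$ still of full column rank; the same multiplier $(\bar\lambda,\bar\mu)$ continues to witness $K \in \cD(x)$, and this is the index set I will work with.

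By maximality, every row of $A_{I(x)}$ lies in the row span of the rows of $A_K$ together with those of $B$, which, combined with the trivial reverse inclusion, yields $\Range([A_{I(x)}^T\ B^T]) = \Range([A_K^T\ B^T])$. Writing $H$ and $\widehat{H}$ for the matrices stacking $A_{I(x)},B$ and $A_K,B$ respectively, this equality of ranges is exactly ${\rm Null}(H) = {\rm Null}(\widehat{H})$, while the full column rank of $[A_K^T\ B^T]$ is the same as $\widehat{H}$ being of full row rank. Lemma \ref{lemma:MPInveqProj} therefore applies and gives
\[
H^T(HH^T)^{\dagger}H \;=\; \widehat{H}^T(\widehat{H}\widehat{H}^T)^{-1}\widehat{H}.
\]
Substituting this into \eqref{eq:P0Jacobian} identifies $P_0$ with the element of $\cP(x)$ associated to $K$ in \eqref{eq:genJacobianPx}, which is what is required.

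The only delicate point I anticipate is articulating the greedy enlargement cleanly: one must check that the support of $\bar\lambda$ remains inside $K$ throughout (so that $K \in \cD(x)$ at termination) and that maximality really does force the column span of $[A_K^T\ B^T]$ to catch up with that of $[A_{I(x)}^T\ B^T]$. Beyond that, the proof is a direct application of Lemma \ref{lemma:MPInveqProj} with no further computation required.
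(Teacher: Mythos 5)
Your proposal is correct and follows essentially the same route as the paper: start from an extreme point of $M(x)$ (equivalently, an element of $\cD(x)$ it induces), enlarge the index set within $I(x)$ while preserving full column rank until $\Range([A_K^T\ B^T]) = \Range([A_{I(x)}^T\ B^T])$, and then apply Lemma \ref{lemma:MPInveqProj} to identify $P_0$ with the member of $\cP(x)$ given by \eqref{eq:genJacobianPx} for that $K$. The greedy enlargement you spell out is exactly the paper's ``adding indexes from $I(x)$ to $\overline K$ if necessary,'' and your two anticipated delicate points are handled correctly (the support stays inside $K$ because indices are only added, and termination forces the range equality).
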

\begin{proof}
	Let $(\bar \lambda, \bar \mu)$ be an extreme point of $M(x)$. Denote $\overline{K}:= {\rm supp}(\bar \lambda)$. Then, $\overline K \subseteq I(x)$. From the definition of extreme points, we observe that
	$[A_{\overline K}^T\ B^T]$ has linearly independent columns.
By adding indexes from $I(x)$  to $\overline K$ if necessary, one can obtain an index
	set $K$ such that $\overline K \subseteq K \subseteq I(x)$, $[A_K^T\ B^T]$ has full column rank and
	\begin{equation}
	\label{eq:RangeAKeqRangeAI}
	{\Range}([A_K^T\ B^T]) = {\Range}([A_{I(x)}^T\ B^T]).
	\end{equation}
	That is, $K\in \cD(x)$. Therefore,
	\begin{equation*}
	P := I_n - [A_{K}^T\ B^T]
	\left( \left[   \begin{array}{c}
	A_{K} \\
	B \\
	\end{array}
	\right]
	[A_{K}^T\ B^T]\right)^{-1}\left[\begin{array}{c}
	A_{K} \\
	B \\
	\end{array}
	\right] \in \cP(x).
	\end{equation*}
	By \eqref{eq:RangeAKeqRangeAI} and Lemma \ref{lemma:MPInveqProj}, we know that
	\[P_0 = P.\] Thus, $P_0\in \cP(x)$ and this completes the proof for the theorem.
\end{proof}
\begin{remark}\label{rmk:singular_B}
	From Lemma \ref{lemma:MPInveqProj}, we know that the matrix $B$ in \eqref{eq:P0Jacobian} can be replaced by any matrix $\widehat B$ satisfying
	${\rm Null}(B) = {\rm Null}(\widehat B)$. In fact, Lemma \ref{lemma:MPInveqProj} and Theorem
	 \ref{thm:elementinJocobianPx} together imply that $P_0$ is invariant with respect to the algebraic
	 representation of the polyhedral convex set $D$,
	 i.e., it is in fact a geometric quantity corresponding to $D$ at $x$.
\end{remark}
{In general, it is not clear whether $P_0$ is a {Clarke} generalized Jacobian. As a computable replacement,
the matrix $P_0$ will be referred to as the HS-Jacobian  of  $\Pi_D$ at $x$ in the sense of \cite{han1997newton}}.
Apart from the calculation of $\Pi_{D}(x)$, one can observe from Theorem \ref{thm:elementinJocobianPx} that the
key step involved in the computation of $P_0$ is the computation of the Moore-Penrose pseudo-inverse in \eqref{eq:P0Jacobian}.
Next, we show that when the matrix $A$ in the inequality constraints in \eqref{setD} is the identity matrix, the
procedure for computing $P_0$ can be further simplified.

\begin{proposition}\label{prop:equi_A}
	Let $\theta\in \Re^n$ be a given vector with each entry $\theta_i$ being $0$ or $1$  for all $i=1,\ldots,n$. Let $\Theta = {\rm Diag}(\theta)$ and $\Sigma = I_n - \Theta$. For any given matrix $H\in\Re^{m\times n}$, it holds that
	\begin{equation}
	\label{eq:genH_S2L}
	P :=  I_{n} - [\Theta\ H^T] \left( \left[
	\begin{array}{c}
	\Theta \\
	H \\
	\end{array}
	\right]
	[\Theta\ H^T]\right)^{\dagger} \left[
	\begin{array}{c}
	\Theta \\
	H \\
	\end{array}
	\right] = \Sigma - \Sigma H^T(H\Sigma H^T)^{\dagger} H\Sigma.
	\end{equation}
\end{proposition}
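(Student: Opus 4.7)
The plan is to recognize that both sides of \eqref{eq:genH_S2L} are, up to an additive shift, orthogonal projections, and then to identify those projections using the simple structural identities $\Theta^2=\Theta$, $\Sigma^2=\Sigma$, $\Theta\Sigma=0$, $\Theta+\Sigma=I_n$, all of which follow from $\theta$ being $0/1$.

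First I would apply the identity \eqref{eq:MPinvd} from the proof of Lemma \ref{lemma:MPInveqProj}, namely $M^T(MM^T)^{\dagger}M = \Pi_{\Range(M^T)}$ for any matrix $M$. Taking $M = [\Theta\ H^T]^T = \begin{bmatrix}\Theta\\H\end{bmatrix}$ rewrites the left-hand side $P$ as $I_n - \Pi_{\Range([\Theta\ H^T])}$. For the right-hand side, set $K := H\Sigma$ and note that $KK^T = H\Sigma\Sigma^T H^T = H\Sigma H^T$ (using $\Sigma^2=\Sigma$ and $\Sigma^T=\Sigma$); the same identity then gives $\Sigma H^T(H\Sigma H^T)^{\dagger}H\Sigma = K^T(KK^T)^{\dagger}K = \Pi_{\Range(\Sigma H^T)}$, so the right-hand side equals $\Sigma - \Pi_{\Range(\Sigma H^T)}$.

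The problem therefore reduces to verifying the identity $\Pi_{\Range([\Theta\ H^T])} = \Theta + \Pi_{\Range(\Sigma H^T)}$, since $I_n-\Theta=\Sigma$ then closes the loop. To prove this I would establish the orthogonal direct-sum decomposition $\Range([\Theta\ H^T]) = \Range(\Theta) \oplus \Range(\Sigma H^T)$. Orthogonality is immediate from $\Theta\Sigma=0$: for any $u,v$, $\langle \Theta u,\Sigma H^T v\rangle = u^T\Theta\Sigma H^T v = 0$. Equality of the two subspaces follows from $\Theta+\Sigma=I_n$: every $H^T a$ decomposes as $\Theta H^T a + \Sigma H^T a$, with the first summand in $\Range(\Theta)$ and the second in $\Range(\Sigma H^T)$, so $\Range([\Theta\ H^T]) = \Range(\Theta)+\Range(H^T) \subseteq \Range(\Theta)+\Range(\Sigma H^T)$; the reverse inclusion is trivial since $\Sigma H^T a = H^T a - \Theta H^T a \in \Range(H^T)+\Range(\Theta)$.

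Finally, I would invoke the standard fact that the orthogonal projection onto an orthogonal direct sum is the sum of the projections onto the summands, and observe that $\Theta$, being a $0/1$ diagonal matrix, \emph{is} itself the orthogonal projection onto $\Range(\Theta)$. This yields $\Pi_{\Range([\Theta\ H^T])} = \Theta + \Pi_{\Range(\Sigma H^T)}$, and substitution produces $P = I_n - \Theta - \Pi_{\Range(\Sigma H^T)} = \Sigma - \Pi_{\Range(\Sigma H^T)}$, which is exactly the right-hand side of \eqref{eq:genH_S2L}. The only step requiring any genuine attention is the orthogonal decomposition above; everything else is bookkeeping around Lemma \ref{lemma:MPInveqProj} and the idempotency/orthogonality relations between $\Theta$ and $\Sigma$.
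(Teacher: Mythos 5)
Your proof is correct, but it takes a genuinely different route from the paper's. You identify both sides of \eqref{eq:genH_S2L}, via the identity $M^T(MM^T)^{\dagger}M=\Pi_{\Range(M^T)}$ underlying Lemma \ref{lemma:MPInveqProj}, as shifted orthogonal projectors, and then reduce everything to the orthogonal direct-sum decomposition $\Range([\Theta\ H^T])=\Range(\Theta)\oplus\Range(\Sigma H^T)$ together with the observations that $\Theta\Sigma=0$, that $\Theta$ is itself the orthogonal projector onto $\Range(\Theta)$, and that projectors add over orthogonal sums; all of these steps check out (in particular $KK^T=H\Sigma H^T$ for $K=H\Sigma$ uses $\Sigma^2=\Sigma=\Sigma^T$ correctly, and your two-sided inclusion argument for the range equality is fine). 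The paper argues differently: it first shows $P=\Sigma P\Sigma$ from ${\rm Ran}(P)={\rm Null}[\Theta;H]\subset{\rm Null}(\Theta)={\rm Ran}(\Sigma)$, then replaces $[\Theta;H]$ by a full row rank matrix $[\widehat\Theta;\widehat H]$ with the same null space (where $\widehat\Theta$ consists of the nonzero rows of $\Theta$), and extracts the $(2,2)$ block of the inverse Gram matrix by a Schur-complement computation, giving $(\widehat H\Sigma\widehat H^T)^{-1}$, before invoking Lemma \ref{lemma:MPInveqProj} once more since ${\rm Null}(\widehat H\Sigma)={\rm Null}(H\Sigma)$. Your geometric argument is shorter and avoids both the auxiliary full-row-rank construction and the block inversion, staying entirely at the level of ranges and projectors; the paper's computation, while more algebraic, keeps explicit reduced matrices in view, which is closer in spirit to how the formula is actually implemented numerically. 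Either proof is acceptable.
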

\begin{proof} We only consider {the case when} $\Theta \ne 0$ as the conclusion holds trivially if $\Theta =0$.
	From Lemma \ref{lemma:MPInveqProj}, we observe that
	$P$ is the orthorgonal projection onto ${\rm Null} \left[\begin{array}{c}
	 \Theta \\
	H \\
	\end{array}
	\right]$. Hence \[{\rm Ran}(P) {=} {\rm Null} \left[\begin{array}{c}
	 \Theta \\
	H \\
	\end{array}
	\right] \subset {\rm Null}(\Theta) = {\rm Ran}(\Sigma),\] where
	the last equality comes from the definitions of $\Theta$ and $\Sigma$.	
	From here, it is easy to show that
	 $P = \Sigma P$.
	  Since $P$ is a symmetric matrix, it further holds that
	\begin{equation}\label{eq:peqsigpsig}
	P = \Sigma P \Sigma.
	\end{equation}
{Let $\widehat{\Theta}$ be the submatrix of $\Theta$ formed by deleting all the zero rows of $\Theta$. Then, it is readily shown that
	\[ {\rm Null}(\Theta) = {\rm Null}(	\widehat \Theta), \quad 	\widehat \Theta 	\widehat \Theta^T = I_r, \quad 	\widehat \Theta^T 	\widehat \Theta = \Theta,\]
	where $r$ is the number of rows of $\widehat \Theta$.}
	Let $\left[\begin{array}{c}
	\widehat \Theta \\
		\widehat H \\
	\end{array}
	\right]$ be a full row rank matrix such that
	${\rm Null} \left[\begin{array}{c}
		\widehat \Theta \\
		\widehat H \\
	\end{array}
	\right]
	= {\rm Null} \left[\begin{array}{c}
	\Theta \\
	H \\
	\end{array}
	\right]. $
	Then, by Lemma \ref{lemma:MPInveqProj} and \eqref{eq:peqsigpsig}, we know that
	\begin{equation*}
	P =I_{n} - [	\widehat \Theta^T\ 	\widehat H^T] M^{-1} \left[
	\begin{array}{c}
		\widehat \Theta \\
		\widehat H  \\
	\end{array}
	\right]
	= \Sigma - [0\ \Sigma 	\widehat H^T]  M^{-1} \left[
	\begin{array}{c}
	0 \\
		\widehat H \Sigma \\
	\end{array}
	\right],
	\end{equation*}
	where
	\[M: = \left[
	\begin{array}{c}
		\widehat \Theta \\
		\widehat H \\
	\end{array}
	\right][	\widehat \Theta^T\ 	\widehat H^T] =
	\left[
	\begin{array}{cc}
	{I_r} & 	\widehat \Theta 	\widehat H^T \\
		\widehat H 	\widehat \Theta^T & 	\widehat H	\widehat H^T \\
	\end{array}
	 \right].
	\]
	Therefore, we only need to focus on the $(2,2)$ block of the
	 inverse of the partitioned matrix $M$. Simple calculations show that
	\[  (M^{-1})_{22} = (	\widehat H	\widehat H^T - 	\widehat H	\widehat \Theta^T 	\widehat \Theta 	\widehat H^T)^{-1} = (	\widehat H\Sigma 	\widehat H^T)^{-1}.\]
	Therefore,
	\[P = \Sigma - \Sigma 	\widehat H^T(	\widehat H \Sigma 	\widehat H^T)^{-1}	\widehat H \Sigma.\] The desired result then follows directly from Lemma \ref{lemma:MPInveqProj} since
	${\rm Null}(	\widehat H\Sigma) = {\rm Null}(H\Sigma)$.
\end{proof}

{We should emphasize that the above proposition is particularly useful for
calculating the HS-Jacobian of the projection over a polyhedral set defined by
the intersection of hyperplanes and the nonnegative orthant.
In particular, we will see how the proposition is applied to
compute the HS-Jacobian of $\Pi_{\mathfrak{B}_n}$
in the next section. Here we provide a  proposition  on the projection over the general polyhedral set rather   than on the Birkhoff polytope only as we believe that it can be useful in other situations.}

\section{Efficient procedures for computing $\Pi_{\mathfrak{B}_n}(\cdot)$ and its HS-Jacobian}
\label{sec:projBP}

In this section, we focus on the projection over the Birkhoff polytope $\mathfrak{B}_n$ and calculate the associated HS-Jacobian
by employing the efficient procedure developed in Theorem \ref{thm:elementinJocobianPx} and Proposition
\ref{prop:equi_A}. As a by-product, we also describe and
implement a highly efficient algorithm for computing the projection $\Pi_{\mathfrak{B}_n}(G) $, i.e., the optimal solution for  problem \eqref{eq:Proj_Birkhoof} with a given matrix $G\in\Re^{n\times n}$.

Let the linear operator $\cB:\Re^{n\times n}\to \Re^{2n}$ be defined by
\[\cB(X) := [e^T X^T\  e^TX ]^T,\quad X\in\Re^{n\times n}.\]
Then, problem \eqref{eq:Proj_Birkhoof} can be represented as
\begin{equation}\label{eq:newton_proj_P}
\min\, \left\{ \frac{1}{2} \norm{X - G}^2 \,\mid\, \cB X = b, \, X\in C  \right\},
\end{equation}
where $b := [e^T \ e^T]^T\in \Re^{2n}$ and $C:= \{X\in\Re^{n\times n}\mid X\ge 0\}$.
Note that $b\in {\rm Ran}(\cB)$ and ${\rm dim}({\rm Ran}(\cB)) = 2n-1$.

{
Suppose that $\overline G := \Pi_{\mathfrak{B}_n}(G)$ has been computed.  {We then aim} to find the HS-Jacobian of  $\Pi_{\mathfrak{B}_n}$ at the given point $G$.
Define  the linear operator $\Xi:\Re^{n\times n} \to \Re^{n\times n}$ by
\begin{equation}\label{eq:cAH}
\Xi(H) := H - \Theta^G \circ H, \quad \, H\in\Re^{n\times n},
\end{equation}
where $\Theta \in \Re^{n\times n}$ is given {as follows. For} all $1\le i,j\le n$,
\begin{equation*}
\Theta_{ij}^G =  \left\{
\begin{aligned}
&1, \quad {\rm if} \; \overline G_{ij} = 0, \\[5pt]
&0, \quad {\rm otherwise.}
\end{aligned}
\right.
\end{equation*}

\begin{proposition}\label{prop:HS_Jacobian_DSP}
	Given $G\in\Re^{n\times n}$, let $\Xi$ be the linear operator defined in
	\eqref{eq:cAH}.
	Then the  linear operator $\cP:\Re^{n\times n}\to \Re^{n\times n}$ given by
	\begin{equation}\label{eq:HS-Jocobian_cPH}
	\cP(H):= \Xi(H) - \Xi\cB^*(\cB\Xi\cB^*)^{\dagger}\cB\Xi(H),\quad \forall\, H\in\Re^{n\times n},
	\end{equation}
	is the HS-Jacobian of  $\Pi_{\mathfrak{B}_n}$ at $G$. Moreover, $\cP$ is self-adjoint and positive semidefinite.
\end{proposition}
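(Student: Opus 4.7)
My plan is to reduce the proposition to a direct application of Theorem~\ref{thm:elementinJocobianPx} followed by the simplification in Proposition~\ref{prop:equi_A}, after first casting \eqref{eq:newton_proj_P} into the vectorized polyhedral form \eqref{setD}. Concretely, I would identify $X \in \Re^{n\times n}$ with $\vec(X) \in \Re^{n^2}$, so that the Birkhoff polytope reads $\{v \in \Re^{n^2} : I_{n^2}\, v \ge 0,\ \widehat{B}\, v = \hat b\}$, where $\widehat{B}$ is any full row rank matrix obtained, say, by removing one redundant row from the matrix representation of $\cB$. Under this identification the inequality block has ``$A = I_{n^2}$'', and at $\overline G = \Pi_{\mathfrak{B}_n}(G)$ the active set $I(G)$ of \eqref{eq:Ix} equals $\{(i,j) : \overline G_{ij} = 0\}$, i.e., the support of the indicator matrix $\Theta^G$.

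Theorem~\ref{thm:elementinJocobianPx} then supplies an explicit HS-Jacobian $P_0 \in \cP(G)$ through formula \eqref{eq:P0Jacobian}. I would next invoke Proposition~\ref{prop:equi_A} with $\theta := \vec(\Theta^G)$ and $H := \widehat{B}$. The key bookkeeping step is to observe that $\Sigma = I_{n^2} - {\rm Diag}(\theta)$, transported back to matrix space through $\vec^{-1}$, acts precisely as $H \mapsto H - \Theta^G \circ H$, which is the operator $\Xi$ defined in \eqref{eq:cAH}. Consequently, the right-hand side of \eqref{eq:genH_S2L} becomes $\Xi - \Xi \cB^*(\cB\Xi\cB^*)^\dagger \cB\Xi$, where I have used Lemma~\ref{lemma:MPInveqProj} (cf.\ Remark~\ref{rmk:singular_B}) to replace $\widehat{B}$ by $\cB$, since they share the same null space. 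This is exactly the operator $\cP$ of \eqref{eq:HS-Jocobian_cPH}, so $\cP$ represents $P_0$ and is therefore an HS-Jacobian of $\Pi_{\mathfrak{B}_n}$ at $G$.

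For the self-adjointness and positive semidefiniteness, I would appeal once more to Lemma~\ref{lemma:MPInveqProj}: formula \eqref{eq:P0Jacobian} identifies $P_0$ with the orthogonal projection onto the null space of $\bigl[A_{I(G)}^T,\ B^T\bigr]^T$. As such, $P_0$ is a symmetric idempotent, hence self-adjoint and positive semidefinite, and these properties are inherited by its equivalent operator form $\cP$.

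The main obstacle is not conceptual but rather the careful matching of the vectorized algebraic objects ($\Theta$, $\Sigma$, $H$) with their operator counterparts ($\Theta^G$, $\Xi$, $\cB$), together with the handling of the rank deficiency of $\cB$ via Remark~\ref{rmk:singular_B}. Once these identifications are in place, the conclusion follows by direct substitution.
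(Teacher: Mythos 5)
Your proposal is correct and follows essentially the same route as the paper, whose proof simply cites Theorem~\ref{thm:elementinJocobianPx}, Proposition~\ref{prop:equi_A} and Remark~\ref{rmk:singular_B}; you have merely spelled out the vectorization, the identification of the active set with the support of $\Theta^G$, and the handling of the rank deficiency of $\cB$ via Lemma~\ref{lemma:MPInveqProj}. The projection-operator argument for self-adjointness and positive semidefiniteness is likewise consistent with the paper's reasoning.
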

\begin{proof}
	The desired result follows directly from Theorem \ref{thm:elementinJocobianPx},  Proposition \ref{prop:equi_A} and
	Remark \ref{rmk:singular_B}.
\end{proof}
}

{Next, we focus on designing an efficient algorithm for computing the optimal solution of problem \eqref{eq:newton_proj_P}, i.e, the projection $\Pi_{\cB_n}(G)$.}
By some simple calculations, we can derive {a} dual of \eqref{eq:newton_proj_P} in the minimization form as follows:
\begin{equation}
\label{eq:newton_proj_D}
\min\, \left\{\varphi(y):= \frac{1}{2} \norm{\Pi_{C}(\cB^*y + G)}^2  - \inprod{b}{y} - \frac{1}{2}\norm{G}^2 \,\mid\, y\in \Range(\cB)\right\}.
\end{equation}
With no difficulty, we can write down the KKT conditions associated with problems \eqref{eq:newton_proj_P} and \eqref{eq:newton_proj_D} as follows:
\begin{equation}
\label{eq:KKT-PD}
X = \Pi_{C}(\cB^*y + G),\quad \cB X = b, \quad y\in\Range(\cB).
\end{equation}
{Note that the subspace constraint $y\in \Range(\cB)$ is imposed to ensure the boundedness of the solution set of \eqref{eq:newton_proj_D}.} Indeed, since ${\rm int}(C) \neq \emptyset$ and $\cB:\Re^{n\times n}\to \Range(\cB)$ is surjective, we have from
	\cite[Theorem 2.165]{bonnans2013perturbation} that the solution set to the KKT system \eqref{eq:KKT-PD} is nonempty and for any $\tau \in \Re$ the level set $\{y\in\Range(\cB)\mid \varphi(y)\le \tau \}$ is convex, closed and bounded.


Note that $\varphi(\cdot)$ is convex and continuously differentiable on $\Range(\cB)$ with
\[\nabla \varphi(y) = \cB \Pi_{C}(\cB^* y + G) - b, \quad \forall\, y\in\Range(\cB).\]
Let $\bar y$ be a  solution to the following nonsmooth equation
\begin{equation}\label{eq:nablaphi}
\nabla \varphi(y) = 0, \quad y\in\Range(\cB)
\end{equation}
and denote $\overline X: = \Pi_{C}(\cB^*\bar y + G)$. Then,
$(\overline X, \bar y)$ solves the KKT system \eqref{eq:KKT-PD}, i.e., $\overline X$ is the unique optimal solution to problem \eqref{eq:newton_proj_P} and $\bar y$ solves problem \eqref{eq:newton_proj_D}. Let $y\in\Range(\cB)$ be any given point. Define the following operator
\[\hat \partial^2\varphi(y):= \cB\partial \Pi_{C}(\cB^*y + G)\cB^*,\]
where $\partial \Pi_{C}(\cB^*y + G)$ is the Clarke subdifferential
 \cite{clarke1990optimization} of the Lipschitz continuous mapping $\Pi_{C}(\cdot)$ at $\cB^*y + G$.
 From \cite{hiriart1984generalized}, we have that
\[\partial^2 \varphi(y)h = \hat \partial^2\varphi(y)h,\quad \forall \,h\in\Re^{2n},\]
where $\partial^2\varphi(y)$ denotes the generalized Hessian of $\varphi$ at $y$, i.e., the Clarke subdifferential of $\nabla \varphi$ at $y$.
Given $X\in\Re^{n\times n}$, define the linear operator $\cU:\Re^{n\times n} \to \Re^{n\times n}$ as follows:
\begin{equation}\label{eq:ssn_U}
\cU(H) : = {\Omega^X}\circ H, \quad \forall \, H\in\Re^{n \times n},
\end{equation}
where $``\circ"$ denotes the Hadamard product of two matrices  and for $1\le i,j\le n$,
\begin{equation}\label{eq:ssncg1_Sigma}
{\Omega_{ij}^X} =  \left\{
\begin{aligned}
&1, \quad {\rm if} \, X_{ij} \ge 0, \\[5pt]
&0, \quad {\rm otherwise.}
\end{aligned}
\right.
\end{equation}
From the definition of the simple polyhedral convex set $C$, it is easy to see that
$\cU\in \partial\Pi_{C}(X)$.

Next, we present an inexact semismooth Newton method for solving problem \eqref{eq:newton_proj_D} and study its global and local convergence.
Since
$\Pi_{C}(\cdot)$ is strongly semismooth as  it is a Lipschitz continuous piecewise affine function
\cite{mifflin1977semismooth,qi1993nonsmooth},
we can design a superlinearly or even quadratically convergent semismooth Newton method to solve the
nonsmooth equation \eqref{eq:nablaphi}.

The template of the semismooth Newton conjugate gradient (CG) method for solving \eqref{eq:newton_proj_D} is presented
as follows.

\medskip

\centerline{\fbox{\parbox{\textwidth}{
{\bf Algorithm {\sc Ssncg1}}: {\bf A  semismooth Newton-CG algorithm for solving
\eqref{eq:newton_proj_D}}.
\\[5pt]
Given $\mu \in (0, 1/2)$, $\bar{\eta} \in (0, 1)$, $\tau_1,\tau_2\in(0,1)$, $\tau \in (0,1]$, and $\delta \in (0, 1)$,
choose $y^0\in \Range(\cB)$.
Iterate the following steps for $j=0,1,\ldots:$
\begin{description}
\item[Step 1.]  Choose  $\cU_j\in \partial{\Pi_C}(\cB^*y^j +  {G})$ as given in \eqref{eq:ssn_U}.
 Let $\cV_j := \cB \cU_j \cB^*$ and $\varepsilon_j = \tau_1\min\{\tau_2,\norm{\nabla\varphi(y^j)}\}$.
 Apply the CG algorithm with the zero vector as the starting point to find an approximate solution $d^j$ to the following linear system
\begin{equation}\label{eqn-epsk}
(\cV_j   + \varepsilon_j I_{2n}) d  + \nabla \varphi(y^j) = 0, \quad d\in\Range(\cB)
\end{equation}
such that
\[
\norm{(\cV_j  + \varepsilon_jI_{2n}) d^j + \nabla \varphi(y^j)}\le \min(\bar{\eta}, \| \nabla \varphi(y^j)\|^{{1+\tau}}).
\]
\item[Step 2.] (Line search)  Set $\alpha_j = \delta^{m_j}$, where $m_j$ is the first nonnegative integer $m$ for which
                         \begin{equation*}
                          \varphi(y^j + \delta^{m} d^j) \leq \varphi(y^j) + \mu \delta^{m}
                           \langle \nabla \varphi(y^j), d^j \rangle.
                           \end{equation*}
\item[Step 3.] Set $y^{j+1} = y^j + \alpha_j \, d^j$.
\end{description}
}}}

\medskip

{We note that at each iteration of Algorithm {\sc Ssncg1}, $\cV_j$ is self-adjoint positive semidefinite. Indeed, for $j=0,1,\ldots$, the self-adjointness of $\cV_j$ follows from the self-adjointness of $\cU_j$ and it further holds that
\[\inprod{d}{\cV_j d} = \inprod{d}{\cB \cU_j \cB^* d}
= \sum_{(k,l)\in \Gamma_j} (\cB^* d)_{kl}^2 \ge 0, \quad \forall \, d\in\Re^{2n}, \]
where the last equation follows from the definition of $\cU_j$ given in \eqref{eq:ssn_U} and the index set  $\Gamma_j$ is defined by $\Gamma_j:= \{ {(k,\l)} \mid (\cB^*y^j + G)_{kl}\ge 0,\; 1\le k,l\le n \}$.}
The convergence results for the above {\sc Ssncg1} algorithm are stated in the next theorem.
\begin{theorem}\label{thm:ssncg1_convergence}
Let $\{y^j\}$ be the infinite sequence generated by Algorithm {\sc Ssncg1} for solving problem \eqref{eq:newton_proj_D}. Then, $\{y^j\} \subseteq \Range(\cB) $ is a bounded sequence and any accumulation point $\hat y$ $(\in\Range(\cB))$ of $\{ y^j\} $ is an optimal solution to problem \eqref{eq:newton_proj_D}.
\end{theorem}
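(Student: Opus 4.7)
The plan is to proceed in four parts: (a) show that the iterates stay in $\Range(\cB)$; (b) verify $d^j$ is a descent direction so that the Armijo line search is well-defined; (c) deduce boundedness from the level-set property noted after \eqref{eq:KKT-PD}; (d) prove that any accumulation point $\hat y$ satisfies $\nabla\varphi(\hat y) = 0$, so that by the equivalence of \eqref{eq:nablaphi} with the KKT system \eqref{eq:KKT-PD} it solves \eqref{eq:newton_proj_D}.

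For (a), I would note that $\nabla\varphi(y) = \cB\,\Pi_C(\cB^*y+G)-b$ lies in $\Range(\cB)$ because $b\in\Range(\cB)$, and that $\cV_j = \cB\cU_j\cB^*$ maps $\Range(\cB)$ into itself. Hence CG initialized at $0$ generates iterates in the Krylov subspace $\mathrm{span}\{\nabla\varphi(y^j), (\cV_j+\varepsilon_j I)\nabla\varphi(y^j),\ldots\}\subseteq \Range(\cB)$, so $d^j\in\Range(\cB)$ and $y^{j+1}\in\Range(\cB)$ by induction from $y^0\in\Range(\cB)$. For (b), since $\cV_j$ is self-adjoint positive semidefinite (as observed in the paragraph preceding the theorem) and $\varepsilon_j>0$ whenever $\nabla\varphi(y^j)\neq 0$, the CG iterates for the positive definite system $(\cV_j+\varepsilon_j I)d=-\nabla\varphi(y^j)$ strictly decrease the quadratic $q_j(d)=\tfrac12\langle d,(\cV_j+\varepsilon_j I)d\rangle+\langle\nabla\varphi(y^j),d\rangle$ from $q_j(0)=0$. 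Hence $\langle\nabla\varphi(y^j),d^j\rangle\le -\tfrac{\varepsilon_j}{2}\|d^j\|^2<0$, and a standard argument using continuous differentiability of $\varphi$ shows the backtracking terminates after finitely many trials, yielding $\varphi(y^{j+1})\le \varphi(y^j)-\tfrac{\mu\alpha_j\varepsilon_j}{2}\|d^j\|^2$.

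For (c), the iterates lie in the level set $\{y\in\Range(\cB):\varphi(y)\le\varphi(y^0)\}$, which is bounded by the remark following \eqref{eq:KKT-PD}, and telescoping the descent inequality yields $\sum_j\alpha_j\varepsilon_j\|d^j\|^2<\infty$. For (d), let $\hat y$ be an accumulation point and suppose, toward a contradiction, $\nabla\varphi(\hat y)\neq 0$. Along a subsequence $y^{j_k}\to\hat y$, continuity of $\nabla\varphi$ bounds $\|\nabla\varphi(y^{j_k})\|$ and hence $\varepsilon_{j_k}$ below by positive constants. Using the CG descent bound $-\langle \nabla\varphi(y^{j_k}),d^{j_k}\rangle\ge\tfrac{\varepsilon_{j_k}}{2}\|d^{j_k}\|^2$ together with Cauchy--Schwarz gives an upper bound on $\|d^{j_k}\|$, while the inexactness criterion $\|(\cV_{j_k}+\varepsilon_{j_k}I)d^{j_k}+\nabla\varphi(y^{j_k})\|\le\|\nabla\varphi(y^{j_k})\|^{1+\tau}$ combined with the uniform operator norm bound on $\cV_{j_k}$ (inherited from the $1$-Lipschitz continuity of $\Pi_C$) gives a positive lower bound on $\|d^{j_k}\|$. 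Then $\alpha_{j_k}\varepsilon_{j_k}\|d^{j_k}\|^2\to 0$ forces $\alpha_{j_k}\to 0$, so for large $k$ the Armijo test failed at step $\alpha_{j_k}/\delta$; applying the mean value theorem, extracting a further cluster point $\hat d$ of the bounded sequence $\{d^{j_k}\}$, and passing to the limit produces $\langle\nabla\varphi(\hat y),\hat d\rangle\ge\mu\langle\nabla\varphi(\hat y),\hat d\rangle$, contradicting $\mu<1/2$ together with $\langle\nabla\varphi(\hat y),\hat d\rangle<0$.

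The routine parts are (a)--(c); the main obstacle is (d), specifically getting simultaneous upper and lower bounds on $\|d^{j_k}\|$ along the subsequence so that $\alpha_{j_k}\to 0$ can be extracted, and then carefully passing to the limit in the failed Armijo inequality. All other ingredients follow from convexity and smoothness of $\varphi$, the structural bound on $\cV_j$, and the already-established boundedness of level sets.
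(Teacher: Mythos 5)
Your proposal is correct, but it takes a different route from the paper's own proof, which is deliberately short: the paper only verifies part (a) — that $\nabla\varphi(y)\in\Range(\cB)$ and that CG started from the zero vector keeps $d^j$ in the Krylov subspace, hence in $\Range(\cB)$ (exactly your argument) — and then invokes Theorem 3.4 of \cite{zhao2010newton} for the boundedness of $\{y^j\}$ and the optimality of accumulation points. What you have done in parts (b)--(d) is essentially to unpack that citation and reprove it from first principles: the strict decrease of the CG quadratic model giving $\inprod{\nabla\varphi(y^j)}{d^j}\le -\tfrac{\varepsilon_j}{2}\norm{d^j}^2$, finite termination of the Armijo backtracking, boundedness via the compact level sets noted after \eqref{eq:KKT-PD}, summability of $\alpha_j\varepsilon_j\norm{d^j}^2$, and the contradiction at a non-stationary accumulation point obtained from the two-sided bounds on $\norm{d^{j_k}}$ (the upper bound from the descent inequality plus Cauchy--Schwarz, the lower bound from the inexactness tolerance in \eqref{eqn-epsk} together with $\norm{\cV_j}\le\norm{\cB}^2$), forcing $\alpha_{j_k}\to 0$ and then passing to the limit in the failed Armijo trial. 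This is sound; two small points worth making explicit are that CG must perform at least one iteration (the zero vector never meets the tolerance when $\nabla\varphi(y^j)\neq 0$, since $\min(\bar\eta,\norm{\nabla\varphi(y^j)}^{1+\tau})<\norm{\nabla\varphi(y^j)}$), so $d^j\neq 0$ and $\varepsilon_j>0$, and that the final contradiction only requires $\mu<1$, with the strict negativity of $\inprod{\nabla\varphi(\hat y)}{\hat d}$ coming from the uniform bounds $\varepsilon_{j_k}\ge c>0$ and $\norm{d^{j_k}}\ge c'>0$ along the subsequence. The trade-off is the usual one: the paper's proof is concise and leans on an established convergence theorem for inexact semismooth Newton-CG methods, while yours is self-contained and makes visible exactly where the structural ingredients (positive semidefiniteness of $\cV_j$, the $\varepsilon_j$ safeguard, and the bounded level sets of $\varphi$ on $\Range(\cB)$) are used.
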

\begin{proof}
	Since $\nabla \varphi(y) \in \Range(\cB)$ for any given $y\in\Re^{2n}$, from the properties of the CG algorithm \cite[Theorem 38.1]{trefethen1997numerical},
 we know that for all $j\ge 0$, $d^j \in \Range(\cB)$. Thus, $\{y^j\}\subseteq \Range(\cB)$. All the other results follow directly from
	\cite[Theorem 3.4]{zhao2010newton}.
\end{proof}

Next, we state a theorem on the convergence rate of Algorithm {\sc Ssncg1}. We shall omit the proof here as it can be proved in the same fashion as \cite[Theorem 3.5]{zhao2010newton}.

\begin{theorem}\label{thm:ssncg1_convergencerate}
	Let $ \bar y$ be an accumulation point of the infinite sequence $\{y^j\}$ generated by Algorithm {\sc Ssncg1} for solving problem \eqref{eq:newton_proj_D}.
	Assume that the following constraint nondegeneracy condition
	\begin{equation}\label{eq:non-degen}
	\cB\,{\rm lin}(\cT_{C}(\widehat G)) = \Range(\cB)
	\end{equation}
	holds at $\widehat G: = \Pi_{C}(\cB^*\bar y + G)$, where
	${\rm lin}(\cT_{C}(\widehat G))$ denotes the lineality space of the tangent cone of $C$ at $\widehat G$. Then, the whole sequence $\{y^j\}$ converges to  $\bar y$ and
	\begin{equation*}
	\| y^{j+1} - \bar y \| = O(\|y^j - \bar y\|^{1+\tau}).
	\end{equation*}
\end{theorem}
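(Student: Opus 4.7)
The plan is to adapt the standard superlinear convergence proof for inexact semismooth Newton methods (following \cite[Theorem 3.5]{zhao2010newton}) to our subspace-constrained setting $y\in\Range(\cB)$. The key reduction is to translate the constraint nondegeneracy \eqref{eq:non-degen} into the statement that every element of $\hat\partial^2\varphi(\bar y)$ is positive definite when restricted to $\Range(\cB)$. Once this uniform invertibility is available on a neighborhood of $\bar y$, the remaining ingredients---strong semismoothness of $\Pi_C$, the inexact CG tolerance, and Armijo unit step acceptance---combine in a routine fashion.

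First I would make this translation explicit. Since $C$ is the nonnegative orthant in $\Re^{n\times n}$, the lineality space of $\cT_C(\widehat G)$ is $\{H\in\Re^{n\times n} : H_{ij}=0 \text{ whenever } \widehat G_{ij}=0\}$, i.e., the subspace of matrices supported on $I_+:=\{(i,j):\widehat G_{ij}>0\}$. Condition \eqref{eq:non-degen} thus amounts to surjectivity of $\cB$ restricted to this subspace onto $\Range(\cB)$. Any $\cU\in\partial\Pi_C(\cB^*\bar y+G)$ is Hadamard multiplication by some $\Omega$ with $\Omega_{ij}=1$ on $I_+$ (because $(\cB^*\bar y+G)_{ij}=\widehat G_{ij}>0$ there), so for $d\in\Range(\cB)$ one has $\inprod{d}{\cB\cU\cB^*d}\ge\sum_{(i,j)\in I_+}(\cB^*d)_{ij}^2$. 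If this quantity vanishes, $\cB^*d$ is supported on the complement of $I_+$; pairing $d$ with any $z=\cB H\in\Range(\cB)$ for $H$ supported on $I_+$ gives $\inprod{d}{z}=\inprod{\cB^*d}{H}=0$, and the surjectivity forces $d=0$. Upper semicontinuity of $\partial\Pi_C$ propagates this positive definiteness uniformly to a neighborhood of $\bar y$ within $\Range(\cB)$.

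With uniform invertibility of $\cV_j$ on $\Range(\cB)$ in hand, the rest is a template computation. Since $\nabla\varphi(y^j)\in\Range(\cB)$ and $\cV_j=\cB\cU_j\cB^*$ maps $\Range(\cB)$ into itself, the CG iterates stay in $\Range(\cB)$, so $d^j\in\Range(\cB)$. Strong semismoothness of the piecewise affine $\Pi_C$ gives $\nabla\varphi(y^j)-\cV_j(y^j-\bar y)=O(\|y^j-\bar y\|^2)$; combined with $\varepsilon_j=O(\|\nabla\varphi(y^j)\|)=O(\|y^j-\bar y\|)$ (via the local error bound from the nonsingularity above) and with the CG residual bound $\|\nabla\varphi(y^j)\|^{1+\tau}$, this yields $\|y^j+d^j-\bar y\|=O(\|y^j-\bar y\|^{1+\tau})$. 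A standard argument exploiting $\mu<1/2$ then shows that the Armijo step $\alpha_j=1$ is accepted for all $j$ large, giving convergence of the whole sequence $\{y^j\}$ to $\bar y$ at the stated rate.

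The main obstacle I anticipate is precisely the subspace translation carried out in the second paragraph: $\cV_j$ is structurally singular on all of $\Re^{2n}$ because $\cB$ has a nontrivial kernel, so the nondegeneracy must be used carefully to recover invertibility on $\Range(\cB)$ rather than on the ambient space. Everything downstream---CG subspace invariance, unit step acceptance, and assembly of the superlinear rate---is mechanical once this uniform subspace nonsingularity is established.
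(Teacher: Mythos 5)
Your proposal is correct and follows essentially the route the paper intends: the paper omits the proof, stating it can be carried out ``in the same fashion as \cite[Theorem 3.5]{zhao2010newton}'', and your argument is precisely that adaptation, with the only nontrivial new ingredient being the translation of the constraint nondegeneracy condition \eqref{eq:non-degen} into positive definiteness of every element of $\hat\partial^2\varphi(\bar y)$ restricted to $\Range(\cB)$, which you carry out correctly. The remaining steps (strong semismoothness of $\Pi_C$, the vanishing perturbation $\varepsilon_j$, CG subspace invariance, and eventual acceptance of the unit step via $\mu<1/2$) match the cited template.
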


\begin{remark}
	\label{rml:finite_term_ssncg1}
	In fact, given the piecewise linear-quadratic structure in problem \eqref{eq:newton_proj_D}, the results given in \cite{fischer1996finite} and
	\cite{sun1998finite} further imply that our Algorithm {\sc Ssncg1} with the Newton linear systems \eqref{eqn-epsk} solved exactly can enjoy a finite termination property.
	Therefore, we can expect to obtain an approximate solution to \eqref{eq:newton_proj_P} through Algorithm {\sc Ssncg1} with the error on the order of the machine precision ({provided the rounding errors introduced by the intermediate
	computations are not amplified significantly}).
\end{remark}

\subsection{Efficient implementations}
In our implementation of Algorithm {\sc Ssncg1}, the key part is to
solve the  linear system \eqref{eqn-epsk} efficiently. Note that a
similar linear system is also involved in the calculation of the
HS-Jacobian in \eqref{eq:HS-Jocobian_cPH}. Here, we propose to use the conjugate gradient method to solve \eqref{eqn-epsk}. In this subsection, we shall discuss the efficient implementation of the corresponding matrix vector multiplications.

Let
\[V : = B \,{\rm Diag}({\vec}({\Omega}))B^T \;\in \; \Re^{2n\times 2n},
\]
where $B\in\Re^{2n\times n^2}$ denotes the matrix representation of $\cB$ with respect to
the standard basis of $\Re^{n\times n}$ and $\Re^{2n}$ and ${\Omega}$ is given in \eqref{eq:ssncg1_Sigma}. Given $\varepsilon\ge 0$, we shall focus on the following linear system
\begin{equation}\label{eq:vd}
 (V + \varepsilon I_{2n}) d = r.
 \end{equation}
Here, $r\in\Re^{2n}$ is a given vector.
At the first glance, the cost of computing the matrix-vector multiplication $Vd$ for a given vector $d\in\Re^{2n}$
 would be very expensive when the dimension $n$ is large. Fortunately, the matrix $B$ has a  special structure
which we can exploit  to derive a closed form formula for $V$.
Indeed, we have that
\[ B = \left[\begin{array}{c}
e^T \otimes I_n \\
I_n \otimes e^T
\end{array}\right],\]
where ``$\otimes$'' denotes the Kronecker product. Therefore, we can derive the closed form representation of $V$ as follows:
\begin{equation*}
V = \left[\begin{array}{cc}
{\rm Diag}(\Omega e) & {\Omega} \\
({\Omega})^T & {\rm Diag}(({\Omega})^T e)
\end{array}\right].
\end{equation*}
Now it is clear that the computational cost of $Vd$ is only of the order
$\cO(n^2)$. Furthermore, from the $0$-$1$ structure of ${\Omega}$ and a close examination of the sparsity of ${\Omega}$, it is not difficult to show that the computational cost of $Vd$ can further be reduced to $\min\big\{\cO(\gamma + n), \cO(n^2 - \gamma + n)\big\}$, where  $\gamma$ is the number of  nonzero elements in ${\Omega}$.
{Following} the terminology used in \cite{lasso}, this sparsity will {be referred to as} the second order sparsity of the underlying projection problem. Similar as is shown in \cite{lasso}, this second order sparsity is the key ingredient for our {efficient} 
implementation of Algorithm {\sc Ssncg1}.
Meanwhile, from the above representation of $V$, we can construct a simple preconditioner for the coefficient matrix in \eqref{eq:vd} as follows
\[\widehat V := {\rm Diag}([e^T{(\Omega)}^T  \   e^T{\Omega}]^T) + \varepsilon I_{2n}.\] Clearly, $\widehat V$  will be a good approximation for $V + \varepsilon I_{2n}$ when ${\Omega}$ is a sparse matrix.

\section{Quadratic programming problems with Birkhoff polytope constraints}
\label{sec:QP_BP}

As a demonstration on  how one can take advantage  of
the {efficient} 
computation of the projection $\Pi_{\mathfrak{B}_n}$ and its HS-Jacobian presented in the last section,
here we show how such {an efficient} 
computation
 can be employed in the design of efficient algorithms for solving the following convex quadratic programming problem:
\begin{equation*}
({\bf P}) \quad \min \left\{f(X): = \frac{1}{2}\inprod{X}{\cQ X} + \inprod{G}{X} + \delta_{\mathfrak{B}_n}(X) \right\},
\end{equation*}
where $\cQ:\Re^{n\times n} \to \Re^{n\times n} $ is a self-adjoint positive semidefinite linear operator, $G\in \Re^{n \times n}$ is a given matrix, $\delta_{\mathfrak{B}_n}$ is the indicator function of $\mathfrak{B}_n$.
Its dual problem in the minimization form is given by
\[({\bf D}) \quad \min \left\{ \delta_{\mathfrak{B}_n}^*(Z) + \frac{1}{2}\inprod{W}{\cQ W}  \mid Z + \cQ W  + G = 0,\, W\in\Range(\cQ)\right\},\]
where $\Range(\cQ)$ denotes the range space of $\cQ$ and $\delta_{\mathfrak{B}_n}^*$ is the conjugate of the indicator function $\delta_{\mathfrak{B}_n}$.
{Similar to the subspace constraint $y\in\Range(\cB)$ in problem \eqref{eq:newton_proj_D}, the constraint $W\in\Range(\cQ)$ ensures the
boundedness of the solution set of ({\bf D}).} Specifically, under this subspace constraint, since $\mathfrak{B}_n$ is a compact set with a nonempty interior, we know that both the primal and dual optimal solution sets are nonempty and compact. In addition, the fact that $\cQ$ is positive definite on $\Range(\cQ)$ further implies that problem ({\bf D}) has a unique optimal solution $(Z^*,W^*)\in \Re^{n\times n}\times\Range(\cQ)$.


Equipped with the {efficient} 
solver ({\sc Ssncg1} developed in the last section) for computing $\Pi_{\mathfrak{B}_n}(\cdot)$,
it is reasonable for us to use a simple first order method to solve ({\bf P}) and ({\bf D}). For example, one can
adapt the accelerated proximal gradient (APG) \cite{nesterov1983method,beck2009fast} method to solve ({\bf P}) and the classic two block alternating direction method of multipliers   \cite{ADMM1,ADMM2} method with the step-length of 1.618 to solve ({\bf D}).
However, these first order methods may encounter stagnation difficulties or suffer from extremely slow local convergence, especially when one is searching for high accuracy solutions for ({\bf P}) and ({\bf D}).
In order to be competitive against  state-of-the-art interior point method based QP solvers such as those
implemented in Gurobi,
here we propose a semismooth Newton based augmented Lagrangian method for solving
({\bf D}),
wherein we show how one can take full advantage of the
{efficient} 
computation of  $\Pi_{\mathfrak{B}_n}(\cdot)$ and its  HS-Jacobian
to design {a fast} algorithm.

{Here, the main reason for using the dual ALM approach is that	the subproblem in each iteration of the dual ALM is a strongly convex  minimization problem.
	Armed with this critical property, as will be shown in Theorem \ref{thm:ssncg2} and Remark 
	\ref{rmk:ssncg2}, one can naturally apply the inexact semismooth Newton-CG method to solve a reduced problem in the variable $W\in {\rm Ran(\cQ)}\subset \cS^n$
	and the SSNCG method is guaranteed to converge superlinearly (or even quadratically
	if the inexact direction is computed with high accuracy).
    In contrast, if one were to apply the ALM to the
    primal problem, $\min\{ \frac{1}{2}\inprod{X}{\cQ X} + \inprod{G}{X} + \delta_{\mathcal{B}_n}(X)\}$,
    one would first introduce the constraint $X-Y=0$ to make the terms in the objective function separable,
    i.e., $\min\{ \frac{1}{2}\inprod{X}{\cQ X} + \inprod{G}{X} + \delta_{\mathcal{B}_n}(Y) \mid X- Y = 0\}.$
    Then the corresponding reduced subproblem at the $k$th iteration of the primal ALM approach
    would take the following form:
    \begin{eqnarray*}
    	\min \Big\{ \phi_k(X) := \frac{1}{2}\inprod{X}{\cQ X} + \inprod{G}{X}+
    	\frac{\sigma}{2} \norm{ (X+\sigma^{-1} \Lambda^k) - \Pi_{\mathcal{B}_n}(X+\sigma^{-1} \Lambda^k) }^2
    	\mid X \in \cS^n
    	\Big\},
    \end{eqnarray*}
    where $\Lambda^k$ denotes the multiplier corresponding to the constraint $X-Y=0$.
    However, for this reduced subproblem in the variable $X\in \cS^n$,
    the objective function is not necessarily strongly convex when $\cQ$ is singular (especially for the extreme case when $\cQ=0$).
    Therefore, the SSNCG method applied to this
    reduced subproblem in $X$ may not have  superlinear linear convergence.
}


 Given $\sigma >0$, the augmented Lagrangian function associated with ({\bf D}) is given as follows:
\[\cL_{\sigma}(Z,W;X) =  \delta_{
	\mathfrak{B}_n}^*(Z) + \frac{1}{2}\inprod{W}{\cQ W}  - \inprod{X}{Z + \cQ W + G} + \frac{\sigma}{2}\norm{Z + \cQ W + G}^2,\]
where $(Z,W,X) \in \Re^{n\times n} \times \Range(\cQ)\times  \Re^{n\times n}$.
The augmented Lagrangian method for solving ({\bf D}) has the following template. {In the algorithm, the notation $\sigma_{k+1} \uparrow \sigma_{\infty} \le \infty$ means that $\sigma_{k+1} \ge \sigma_k$ and the limit of $\{\sigma_k \}$, denoted as $\sigma_{\infty}$, can be some constant finite number or $\infty$. As one can observed later in Theorem \ref{thm:ALM}, the global convergence of Algorithm ALM can be obtained without requiring $\sigma_{\infty} = \infty$.}

\bigskip
\centerline{\fbox{\parbox{\textwidth}{
			{\bf Algorithm ALM}: {\bf An augmented Lagrangian method
				for solving ({\bf D}).}
			\\[5pt]
			Let $\sigma_0 >0$ be a given parameter.
			Choose $(W^0,X^0)\in\Range(\cQ)\times\Re^{n\times n}$
			and $Z^0\in\textup{dom}(\delta^*_{\mathfrak{B}_n})$.
			For $k=0,1,\ldots$, perform the following steps in each iteration:
			\begin{description}
				\item [Step 1.] Compute
				\begin{equation} \label{eq:ALM_WZ}
				(Z^{k+1},W^{k+1})  \approx {\rm argmin} \left\{
				\Psi_k(Z,W):=\mathcal{L}_{\sigma_k}(Z,W; X^{k})
				  \mid\, (Z,W)\in\Re^{n\times n} \times \Range(\cQ)
				\right\}.
				\end{equation}
				\item[Step 2.] Compute
				\begin{equation}\label{eq:ALM_X}
				X^{k+1} = X^k - \sigma_k( Z^{k+1} + \cQ W^{k+1} +  G).
				\end{equation}
				Update $\sigma_{k+1} \uparrow \sigma_\infty\leq \infty$.
			\end{description}
}}}
\bigskip

We shall discuss first the stopping criteria for approximately solving subproblem
\eqref{eq:ALM_WZ}. For any $k\ge 0$, define
\[f_k(X) = -\frac{1}{2}\inprod{X}{\cQ X} - \inprod{X}{G} - \frac{1}{2\sigma_k}\norm{X - X^k}^2,\quad \forall X\in\Re^{n\times n}.\]
Note that $f_k(\cdot)$ is in fact the objective function in the dual problem of \eqref{eq:ALM_WZ}. Let
$\{\varepsilon_k\}$ and $\{\delta_k\}$ be two given positive summable sequences. Given  $X^k\in \Re^{n\times n}$, we propose to terminate solving
 the subproblem \eqref{eq:ALM_WZ} with either one of the following two easy-to-check stopping criteria:
\begin{equation*}
\label{ALM_stop}
\begin{aligned}
&(\textup{A}) \quad
\left\{
\begin{aligned}
&\Psi_k(Z^{k+1},W^{k+1}) - f_k(X^{k+1})
\le \varepsilon_k^2/2\sigma_k, \\[5pt]
& \gamma(X^{k+1})  \le \alpha_k \varepsilon_k/\sqrt{2\sigma_k},
\end{aligned}
\right.
\\[8pt]
&({\rm B})\quad
\left\{
\begin{aligned}
& \Psi_k(Z^{k+1},W^{k+1}) - f_k(X^{k+1}) \le
\delta_k^2 \norm{X^{k+1} - X^k}^2/2\sigma_k, \\[5pt]
& \gamma(X^{k+1})  \le \beta_k\delta_k\norm{X^{k+1} - X^k} /\sqrt{2\sigma_k},
\end{aligned}
\right.
\end{aligned}
\end{equation*}
where $\gamma(X^{k+1}) := \norm{X^{k+1} - \Pi_{\mathfrak{B}_n}(X^{k+1})},$
\[\alpha_k = \min\left\{1,\sqrt{\sigma_k},\frac{\varepsilon_k}{\sqrt{2\sigma_k}\norm{\nabla f_k(X^{k+1})}}\right\}\quad {\rm and} \quad \beta_k = \min\left\{1,\sqrt{\sigma_k},\frac{\delta_k\norm{X^{k+1} - X^k}}{\sqrt{2\sigma_k}\norm{\nabla f_k(X^{k+1})}}\right\}.
\]
From \cite[Proposition 4.3]{cui2016on} and \cite[Lemma 2.2]{qsdpnal},
criteria $(A)$ and $(B)$ can be used in ALM to guarantee the global and local convergence of ALM.
{Indeed, from J. Sun's thesis \cite{sun1986thesis} on the investigation of the subdifferentials
of convex piecewise linear-quadratic functions, we know that $\partial f$ is a polyhedral multifunction (see also \cite[Proposition 12.30]{Rockafellar1998variational}). The classic result of Robinson \cite{robinson1981some} on polyhedral multifunctions further implies that Luque's error bound condition \cite[(2.1)]{luque1984asymptotic} associated with $
\partial f$ is satisfied, {i.e., there exist positive constants $\delta$
and $\kappa$ such that
\begin{equation}
\label{eq:errorf}
{\rm dist}(z, \partial f^{-1}(0)) \le \kappa\norm{u}, \quad \forall \, z\in\ \partial f^{-1}(u), \quad \forall \, \norm{u}\le \delta .
\end{equation} }
Thus we can prove  the global and local (super)linear convergence of Algorithm ALM
by adapting the proofs in
\cite[Theorem 4]{rockafellar1976augmented},
\cite[Theorem 2.1]{luque1984asymptotic} and \cite[Theorem 4.2]{cui2016on}.} The next theorem shows that for the convex QP problem ({\bf P}), one can always expect the KKT residual of the sequence generated by the ALM {to converge} at least R-(super)linearly.

Let the objective function $g:\Re^{n\times n}\times \Range(\cQ) \to (-\infty,+\infty]$ associated with ({\bf D}) be
given by
\[g(Z,W) := \delta_{\mathfrak{B}_n}^*(Z) + \frac{1}{2}\inprod{W}{\cQ W}, \quad \forall \, (Z,W)\in \Re^{n\times n}\times \Range(\cQ).\]

\begin{theorem}
	\label{thm:ALM}
	The sequence $\{(Z^k,W^k,X^k)\}$ generated by Algorithm ALM under the stopping criterion $(\textup{A})$
	for all $k\ge0$ is bounded, and $\{X^k\}$ converges to an optimal solution  $X^{\infty}$ of {\rm ({\bf P})}.
	In addition, $\{(Z^k,W^k)\}$ converges to the unique optimal solution {of} {\rm ({\bf D})}. Moreover, for all $k\ge 0$, it holds that
	\begin{equation*}
	\begin{aligned}
	&g(Z^{k+1},W^{k+1}) - \inf \,({\bf D})\\[5pt]
	\le{}
	&\Psi_k(Z^{k+1},W^{k+1}) - \inf \Psi_k + (1/2\sigma_k)(\norm{X^k}^2 - \norm{X^{k+1}}^2).
	\end{aligned}
	\end{equation*}
	
	Let $\Omega$ be the nonempty compact optimal solution set of {\rm ({\bf P})}.
	{Suppose that
		the algorithm is} executed under  {criteria {\rm (A)} and {\rm (B)}} for all $k\ge 0$. {Then, for all $k$ sufficiently large, it holds that
	\begin{eqnarray*}
	{\rm dist}(X^{k+1},  {\Omega}) \le \theta_k {\rm dist}(X^{k}, {\Omega}), \label{eq:asyP} \\[5pt]
	\norm{Z^{k+1}-\cQ W^{k+1} - G} \le \tau_{k}{\rm dist}(X^k,  {\Omega}), \label{eq:asyD_Rsupl}\\[5pt]
	g(Z^{k+1},W^{k+1}) - \inf \,({\bf D}) \le \tau_{k}'{\rm dist}(X^k,  {\Omega}), \label{eq:asygap_Rsupl}
	\end{eqnarray*}
where $0\le \theta_k, \tau_k, \tau'_k <1$ and
 $\theta_k \to \theta_{\infty} = \kappa/\sqrt{\kappa^2 + \sigma_{\infty}^2}$,
$\tau_k\to \tau_{\infty} = 1/\sigma_{\infty}$ and $\tau_k'\to \tau'_{\infty} = \norm{X^{\infty}}/\sigma_{\infty}$
with $\kappa$ given in \eqref{eq:errorf}.
Moreover, $\theta_{\infty} = \tau_{\infty} = \tau_{\infty}' = 0$ if $\sigma_{\infty} = \infty$.}
\end{theorem}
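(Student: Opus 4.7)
The overall strategy is to invoke the established convergence theory for the augmented Lagrangian method due to Rockafellar \cite{rockafellar1976augmented} and Luque \cite{luque1984asymptotic}, as adapted to the inexact setting by the authors in \cite{cui2016on} and \cite{qsdpnal}, once the underlying hypotheses have been verified for $({\bf D})$ and for the objective function $f$ of $({\bf P})$. The two key preliminaries, both already essentially observed in the paragraphs preceding Algorithm ALM, are: (i) the primal optimal set $\Omega$ is nonempty and compact (since $\mathfrak{B}_n$ is compact) and $({\bf D})$ admits a unique optimum $(Z^*, W^*)$ (since $\cQ$ is positive definite on $\Range(\cQ)$); and (ii) the Luque error bound \eqref{eq:errorf} holds for $\partial f$. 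For (ii), the function $f = \tfrac12\inprod{\cdot}{\cQ\cdot} + \inprod{G}{\cdot} + \delta_{\mathfrak{B}_n}$ is convex piecewise linear-quadratic, so by \cite{sun1986thesis} (or \cite[Proposition 12.30]{Rockafellar1998variational}) $\partial f$ is a polyhedral multifunction, and Robinson's theorem \cite{robinson1981some} then yields constants $\kappa, \delta > 0$ for which \eqref{eq:errorf} holds.

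For the global part of the theorem, I would appeal directly to \cite[Theorem 4]{rockafellar1976augmented} together with \cite[Proposition 4.3]{cui2016on} and \cite[Lemma 2.2]{qsdpnal}, which together guarantee that, under the inexact stopping rule $({\rm A})$, the sequence $\{X^k\}$ is bounded and converges to some $X^\infty \in \Omega$, and that $\{(Z^k, W^k)\}$ is bounded with every cluster point solving $({\bf D})$. Uniqueness of the dual optimum then upgrades the latter to convergence of the whole sequence to $(Z^*, W^*)$. The inequality bounding $g(Z^{k+1}, W^{k+1}) - \inf ({\bf D})$ is obtained by a direct algebraic manipulation: one expands $\Psi_k(Z^{k+1}, W^{k+1})$, substitutes the update $X^{k+1} = X^k - \sigma_k(Z^{k+1} + \cQ W^{k+1} + G)$, completes the square in the $\sigma_k$-dependent terms to produce $\tfrac{1}{2\sigma_k}(\norm{X^k}^2 - \norm{X^{k+1}}^2)$, and combines the result with weak duality.

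For the R-linear rate estimates under $({\rm A})$ and $({\rm B})$, I would combine the error bound from the first paragraph with the asymptotic analysis of \cite[Theorem 2.1]{luque1984asymptotic} as refined in \cite[Theorem 4.2]{cui2016on}. The primal bound $\textup{dist}(X^{k+1}, \Omega) \le \theta_k \textup{dist}(X^k, \Omega)$ with $\theta_\infty = \kappa/\sqrt{\kappa^2 + \sigma_\infty^2}$ emerges from Luque's iteration inequality when the error bound constant is $\kappa$. The dual residual bound uses $\norm{Z^{k+1} + \cQ W^{k+1} + G} = \norm{X^{k+1} - X^k}/\sigma_k$ together with $\norm{X^{k+1} - X^k} \le (1 + o(1))\,\textup{dist}(X^k, \Omega)$, the latter coming from the primal contraction together with criterion $({\rm B})$, to produce $\tau_\infty = 1/\sigma_\infty$. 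Finally, the gap bound follows by combining the inequality from the previous paragraph with the dual residual estimate and the convergence $X^k \to X^\infty$, yielding $\tau'_\infty = \norm{X^\infty}/\sigma_\infty$; when $\sigma_\infty = \infty$ all three constants collapse to zero.

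The main obstacle I anticipate is the careful bookkeeping of the subspace constraint $W \in \Range(\cQ)$ throughout the Rockafellar--Luque framework, since the classical theory is typically stated for ALM applied to problems whose dual variables live in a full Euclidean space. This is handled by viewing $(Z, W) \in \Re^{n\times n} \times \Range(\cQ)$ as an element of a Hilbert subspace on which $\cQ$ induces a positive definite quadratic form, so that the strong convexity needed for uniqueness of the dual optimum and for the transfer of the primal error bound \eqref{eq:errorf} to the dual iteration both survive in the restricted setting. A secondary technical subtlety is matching the constant $\kappa$ from Robinson's theorem applied to $\partial f$ with the constant that appears in Luque's asymptotic inequality; this is a routine but notation-heavy exercise which can be carried out by tracking the proof of \cite[Theorem 4.2]{cui2016on} line by line.
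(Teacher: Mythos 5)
Your proposal matches the paper's own treatment: the paper likewise establishes the Luque error bound \eqref{eq:errorf} via the piecewise linear-quadratic structure of $f$ (Sun's thesis, Robinson's polyhedral multifunction result) and then obtains all the global and local (super)linear convergence claims by adapting \cite[Theorem 4]{rockafellar1976augmented}, \cite[Theorem 2.1]{luque1984asymptotic}, \cite[Theorem 4.2]{cui2016on}, with the stopping criteria justified through \cite[Proposition 4.3]{cui2016on} and \cite[Lemma 2.2]{qsdpnal}. Your route is essentially identical, so no further comparison is needed.
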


\begin{remark}
{
	We also note that if the ALM is used to solve an equivalent reformulation of the primal form of ({\bf P}) and the corresponding subproblems  are solved exactly, {then} the global linear convergence of {a} certain constraint norm to zero can be established via using the results developed in \cite{Chiche2016how,Delbos2005global}.}
\end{remark}

Next, we shall discuss how to solve the subproblems \eqref{eq:ALM_WZ} efficiently. Given $\sigma >0$ and $\widehat X \in \Re^{n\times n}$,
since $\cL_{\sigma}(Z,W;\widehat X)$ is strongly convex on
{$\Re^{n\times n} \times \Range(\cQ)$}, we have that, for any
$\alpha \in \Re$, the level set $\cL_{\alpha}:=\{(Z,W)
\in {\Re^{n\times n} \times \Range(\cQ)} \,\mid\,
\cL_{\sigma}(Z,W;\widehat X) \le \alpha\}$ is a closed and bounded convex set.
Moreover, the optimization problem
\begin{equation}
\label{eq:optALM}
\min \left\{ \cL_{\sigma}(Z,W;\widehat X)\mid (Z,W)
\in {\Re^{n\times n} \times \Range(\cQ)} \right\}
\end{equation}
admits a unique {optimal} solution, which we denote as $(\overline Z,\overline W)
\in {\Re^{n\times n} \times \Range(\cQ)}$.
Define
\[\psi(W) := \inf_Z \cL_{\sigma}(Z, W; \widehat X) \quad {\rm and} \quad Z(W):=
\widehat X - \sigma(\cQ W + G), \quad \forall\, W\in\Range(\cQ).\]
{It is not difficult to see that
	$\inf_{W\in\Range(\cQ)} \varphi(W) = \inf_{Z,{W\in\Range(\cQ)}} \cL_{\sigma}(Z,W;\widehat X)$
	and
\[ \sigma^{-1}\big( Z(W) - \Pi_{\mathfrak{B}_n}(Z(W))\big)
= \arg\min_Z \cL_{\sigma}(Z,W;
\widehat{X}), \quad \forall\, W\in\Range(\cQ). \]
Therefore, $(\overline Z,\overline W)$ solves {the minimization} problem  \eqref{eq:optALM} if and only if 
\begin{eqnarray}
&&\overline W = \arg\min \left\{ \psi(W)\,\mid W\in\Range(\cQ)\right\}, \label{eq:ssncg_W}\\[5pt]
&&\overline Z = \sigma^{-1} \big(Z(\overline W) - \Pi_{\mathfrak{B}_n}(Z(\overline W)) \big) = \arg\min_Z \cL_{\sigma}(Z,\overline W; \widehat X). \nonumber
\end{eqnarray}
Simple calculations show that for all $W\in\Range(\cQ)$,
\[\psi(W) = \frac{1}{2}\inprod{W}{\cQ W} + \frac{1}{\sigma}\inprod{Z(W)}{\Pi_{\mathfrak{B}_n}(Z(W))} - \frac{1}{2\sigma}(\norm{\Pi_{\mathfrak{B}_n}(Z(W))}^2 + \norm{\widehat X}^2).
\]
Note that $\psi$ is strongly convex and continuously differentiable on $\Range(\cQ)$ with
\[\nabla \psi(W) = \cQ W - \cQ \Pi_{\mathfrak{B}_{n}}(Z(W)).\]
Thus, $\overline W$, the optimal solution of \eqref{eq:ssncg_W}, can be obtained through solving the following nonsmooth
piecewise affine equation:
\[\nabla \psi(W) = 0, \quad W\in\Range(\cQ).\]
Given $\widehat W$, define the following linear operator $\cM:\Re^{n\times n} \to \Re^{n\times n}$ by
\begin{equation*}\label{eq:cMssncg2}
\cM(\Delta W):= (\cQ + \sigma \cQ \cP \cQ)\Delta W, \quad \forall \Delta W\in\Re^{n\times n},
\end{equation*}
where $\cP$ is the HS-Jacobian of $\Pi_{\mathfrak{B}_n}$ at $Z(\widehat W)$
{as} given in \eqref{eq:HS-Jocobian_cPH} and {it} is self-adjoint and positive semidefinite. Moreover, since $\cQ$ is self-adjoint and positive definite on ${\Range{\cQ}}$, it follows that $\cM$ is also self-adjoint and positive definite on ${\Range{\cQ}}$.
Similarly as in Section \ref{sec:projBP}, we propose to solve the  subproblem \eqref{eq:ssncg_W} by an inexact semismooth Newton method and $\cM$ will be regarded as a computable generalized Hessian of $\varphi$ at $\widehat W$.}

\medskip

\centerline{\fbox{\parbox{\textwidth}{
			{\bf Algorithm {\sc Ssncg2}}: {\bf A  semismooth Newton-CG algorithm for solving
			\eqref{eq:ssncg_W}}.
			\\[5pt]
			Given $\mu \in (0, 1/2)$, $\bar{\eta} \in (0, 1)$, $\tau \in (0,1]$, and $\delta \in (0, 1)$, choose $W^0 \in\Range(\cQ)$.
			Iterate the following steps for $j=0,1,\ldots:$
			\begin{description}
				\item[Step 1.]
				Let $\cM_j := \cQ + \sigma \cQ \cP_j \cQ$ where $\cP_j$
				is the HS-Jacobian of $\Pi_{\mathfrak{B}_n}$ at
				$Z(W^j)$ given in \eqref{eq:HS-Jocobian_cPH}.
				Apply the CG algorithm to find an approximate solution $dW^j $ to the following linear system
				\begin{equation}\label{eq:ssncg2_dW}
				\cM_j dW + \nabla \psi(W^j) = 0,\quad dW\in \Range(\cQ)
				\end{equation}
				such that
				\[
				\norm{\cM_j dW^j  + \nabla \psi (W^j)}\le \min(\bar{\eta}, \| \nabla \psi(W^j)\|^{{1+\tau}}).
				\]
				\item[Step 2.] (Line search)  Set $\alpha_j = \delta^{m_j}$, where $m_j$ is the first nonnegative integer $m$ for which
				\begin{equation*}
				\psi(W^j + \delta^{m} dW^j) \leq \psi(W^j) + \mu \delta^{m}
				\langle \nabla \psi(W^j), dW^j \rangle.
				\end{equation*}
				\item[Step 3.] Set $W^{j+1} = W^j + \alpha_j \, dW^j$.
			\end{description}
}}}

\medskip

Similar to Theorem \ref{thm:ssncg1_convergence} and Theorem \ref{thm:ssncg1_convergencerate}, it is not difficult to obtain the following theorem on the global and local superlinear (quadratic) convergence for the above Algorithm {\sc Ssncg2}. Its proof is omitted for brevity.

\begin{theorem}
	\label{thm:ssncg2}
	Let  $\{W^j\}$ be the infinite sequence generated by Algorithm {\sc Ssncg2}. Then $\{W^j\}$ converges to the unique optimal solution $\overline W \in \Range(\cQ)$ to problem \eqref{eq:ssncg_W} and
	\[\norm{W^{j+1} - \overline W} = O(\norm{W^j - \overline W}^{1+\tau}).\]
\end{theorem}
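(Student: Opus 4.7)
The plan is to mirror the arguments used for Theorems \ref{thm:ssncg1_convergence} and \ref{thm:ssncg1_convergencerate} (which in turn follow \cite[Theorems 3.4 and 3.5]{zhao2010newton}), adapting them to the subspace $\Range(\cQ)$. Three structural facts drive the proof: (i) $\psi$ is strongly convex and continuously differentiable on $\Range(\cQ)$, so a unique minimizer $\overline W\in\Range(\cQ)$ exists; (ii) $\cM_j = \cQ + \sigma\cQ\cP_j\cQ$ is self-adjoint and positive definite on $\Range(\cQ)$, because $\cQ$ is positive definite on $\Range(\cQ)$ and $\cP_j$ is self-adjoint positive semidefinite by Proposition \ref{prop:HS_Jacobian_DSP}; and (iii) $\Pi_{\mathfrak{B}_n}$ is strongly semismooth (being a Lipschitz piecewise affine map), so $\nabla\psi$ is strongly semismooth.

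First I would verify that the algorithm is well-defined and that the iterates remain in $\Range(\cQ)$. Since $\nabla\psi(W^j) = \cQ W^j - \cQ\Pi_{\mathfrak{B}_n}(Z(W^j))\in\Range(\cQ)$ and $\cM_j$ maps $\Range(\cQ)$ into itself, the CG iterates started from $0\in\Range(\cQ)$ yield $dW^j\in\Range(\cQ)$ by \cite[Theorem 38.1]{trefethen1997numerical}; hence $W^{j+1}\in\Range(\cQ)$ inductively. Positive definiteness of $\cM_j$ on $\Range(\cQ)$, combined with the inexact CG termination bound, implies $\inprod{\nabla\psi(W^j)}{dW^j} < 0$ whenever $\nabla\psi(W^j)\ne 0$, so each $dW^j$ is a descent direction for $\psi$ on $\Range(\cQ)$ and the Armijo line search terminates finitely.

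Next I would establish global convergence. Strong convexity of $\psi$ on $\Range(\cQ)$ makes all sublevel sets compact, so $\{W^j\}$ is bounded. The Armijo descent, together with the compactness/descent argument of \cite[Theorem 3.4]{zhao2010newton}, gives $\nabla\psi(W^j)\to 0$, which by uniqueness of the minimizer forces $W^j\to\overline W$. For the local rate, I would exploit strong semismoothness of $\Pi_{\mathfrak{B}_n}$ together with the uniform positive definiteness of $\cM_j$ on $\Range(\cQ)$ near $\overline W$: since $\sigma\cQ\cP_j\cQ\succeq 0$ on $\Range(\cQ)$, the smallest positive eigenvalue of $\cM_j$ on $\Range(\cQ)$ is bounded below by that of $\cQ$ restricted to $\Range(\cQ)$. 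The standard semismooth Newton analysis in \cite[Theorem 3.5]{zhao2010newton} then yields
\[\norm{W^{j+1} - \overline W} = O(\norm{W^j - \overline W}^{1+\tau}),\]
once one knows that the unit step is eventually accepted; this last point is handled by the inexactness tolerance $\min(\bar\eta,\norm{\nabla\psi(W^j)}^{1+\tau})$ imposed in Step 1, exactly as in the cited proof.

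The principal subtlety is the restriction to $\Range(\cQ)$: because $\cQ$ can be singular, $\cM_j$ is only positive definite on this subspace, and consequently the CG iterates, the Newton direction, and all line-search updates must be confined to $\Range(\cQ)$ for the quadratic form $\inprod{\cdot}{\cM_j\,\cdot}$ to be an honest inner product norm controlling the error. Once this invariance is nailed down, the rest of the argument transfers verbatim from the analogous subspace analysis on $\Range(\cB)$ used in Theorems \ref{thm:ssncg1_convergence} and \ref{thm:ssncg1_convergencerate}.
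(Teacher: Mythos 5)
Your overall strategy---transplanting the arguments of Theorems \ref{thm:ssncg1_convergence} and \ref{thm:ssncg1_convergencerate} (i.e., \cite[Theorems 3.4 and 3.5]{zhao2010newton}) to the subspace $\Range(\cQ)$, with the positive definiteness of $\cM_j$ on $\Range(\cQ)$ playing the role of the nondegeneracy condition---is exactly what the paper intends (its proof is omitted, and Remark \ref{rmk:ssncg2} records precisely this positive-definiteness point). Your handling of the subspace invariance of the CG and Newton iterates, the descent property, the Armijo step, and the global convergence via strong convexity of $\psi$ on $\Range(\cQ)$ is fine.

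There is, however, a gap in your justification of the local rate. You invoke the strong semismoothness of $\Pi_{\mathfrak{B}_n}$ and then cite the ``standard semismooth Newton analysis,'' but that analysis requires $\cM_j = \cQ + \sigma\cQ\cP_j\cQ$ to be built from an element of the Clarke generalized Jacobian $\partial\Pi_{\mathfrak{B}_n}(Z(W^j))$ (as is the case for $\cU_j\in\partial\Pi_C$ in {\sc Ssncg1}). Here $\cP_j$ is the HS-Jacobian from \eqref{eq:HS-Jocobian_cPH}, and the paper explicitly notes that it is not known whether such a matrix belongs to the Clarke generalized Jacobian; semismoothness with respect to $\partial\Pi_{\mathfrak{B}_n}$ therefore does not by itself deliver the key estimate $\norm{\nabla\psi(W^j)-\nabla\psi(\overline W)-\cM_j(W^j-\overline W)} = O(\norm{W^j-\overline W}^{1+\tau})$. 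The missing ingredient is Proposition \ref{thm:HanSun1997} combined with Theorem \ref{thm:elementinJocobianPx}: for $W^j$ sufficiently close to $\overline W$ one has $\cD(Z(W^j))\subseteq\cD(Z(\overline W))$, hence $\Pi_{\mathfrak{B}_n}(Z(W^j)) = \Pi_{\mathfrak{B}_n}(Z(\overline W)) + \cP_j\bigl(Z(W^j)-Z(\overline W)\bigr)$ exactly, and since $Z(W^j)-Z(\overline W) = -\sigma\cQ(W^j-\overline W)$ this gives $\nabla\psi(W^j)-\nabla\psi(\overline W) = \cM_j(W^j-\overline W)$ with zero error. The rate is then governed solely by the CG inexactness tolerance $\min(\bar\eta,\norm{\nabla\psi(W^j)}^{1+\tau})$ together with the uniform bounds on $\cM_j$ and $\cM_j^{-1}$ on $\Range(\cQ)$, and the remainder of your argument (including eventual acceptance of the unit step) goes through. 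So the conclusion stands, but the appeal to strong semismoothness should be replaced by this HS-Jacobian local linearization property.
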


\begin{remark}\label{rmk:ssncg2}
	Note that in the above theorem, since $\cQ$ is positive definite on $\Range(\cQ)$, we know that for {each} $j\ge 0$, $\cM_j$ {is} also positive definite on $\Range(\cQ)$.
	Therefore, we do not need any nondegeneracy condition assumption here as is required in Theorem \ref{thm:ssncg1_convergencerate}.
\end{remark}

\begin{remark}
	\label{rmk:ssncg2-rangeQ}
	The restriction of $dW \in \Range(\cQ)$ appears to introduce severe numerical difficulties when we need to solve \eqref{eq:ssncg2_dW}. Fortunately, we can overcome these difficulties via a careful examination of our algorithm and some numerical techniques. Indeed, at the $j$th iteration of Algorithm {\sc Ssncg2}, instead of dealing with \eqref{eq:ssncg2_dW},
	we propose to solve the following simpler linear system
	\begin{equation}\label{eq:dwj}
	(\cI + \sigma \cP \cQ) dW = \Pi_{\mathfrak{B}_n}(Z(W^j)) - W^j,
	\end{equation}
	where $\cI$ is the identity operator defined on $\Re^{n\times n}$. Then, the approximate solution to \eqref{eq:dwj} can be safely used as a replacement of $dW^j$ in the execution of Algorithm {\sc Ssncg2}. We omit the details here for brevity. Interested readers may refer to Section 4 in \cite{qsdpnal} for a detailed discussion
	on why this procedure is legitimate.
\end{remark}

\begin{remark}
	\label{rmk:xkp1}
	{At the $k$th iteration of Algorithm ALM,
	given $X^k$ and $\sigma_k$, we first obtain $W^{k+1}$ via executing Algorithm {\sc Ssncg2}. Then, we have that
	\[Z^{k+1} = \sigma_k^{-1}
	(X^k - \sigma_k(\cQ W^{k+1} + G) - \Pi_{\cB_n}(X^k - \sigma_k(\cQ W^{k+1} + G))).\]
	Therefore, it is easy to see that the multiplier update step
	\eqref{eq:ALM_X} in Algorithm ALM can be equivalently recast as:
\begin{align*}
X^{k+1} = X^k - \sigma_k( Z^{k+1} + \cQ W^{k+1} +  G)
= \Pi_{\cB_n}(X^k - \sigma_k(\cQ W^{k+1} + G)).
\end{align*}}
\end{remark}

\section{Numerical experiments}
\label{sec:numer}

In this section,
we evaluate  the performance of our algorithms from various aspects. We have implemented all our algorithms in
{\sc Matlab}. Unless otherwise  specifically stated, all our computational results are obtained from a 12-core workstation with Intel Xeon E5-2680 processors at 2.50GHz and 128GB
memory. The experiments are run in {\sc Matlab} 8.6 and Gurobi 6.5.2 \cite{Gurobi2016} (with an academic license)
under the 64-bit Windows Operating System.
It is well known that Gurobi is an extremely powerful solver for solving  generic quadratic programming
problems. It is our view that any credible algorithms designed for solving
a specialized class of QP problems should be benchmarked against Gurobi and be able to demonstrate
its advantage over Gurobi. {But we should note  that as a general QP solver, Gurobi does not necessarily fully exploit the specific structure of the Birkhoff polytope, although it can fully exploit the sparsity of
the constraint matrices and variables.}

\subsection{Numerical results for the projection onto the Birkhoff polytope}

First we compare our Algorithm {\sc Ssncg1} with the state-of-the-art solver, Gurobi, for solving large scale instances of the projection problems \eqref{eq:newton_proj_P} and its dual \eqref{eq:newton_proj_D}.
{Note that dual problem \eqref{eq:newton_proj_D} is an unconstrained smooth convex optimization problem.
{For solving such a problem, the accelerated proximal gradient (APG) method of Nesterov  \cite{nesterov1983method} has
become very popular due to its simplicity in implementation and strong iteration complexity.
As it is a very natural method for one to adopt in the first attempt
to solve \eqref{eq:newton_proj_D}, we also implement the APG method for solving \eqref{eq:newton_proj_D}
for comparison purposes.}
}

{Recall from \eqref{eq:newton_proj_D} that $C= \{X\in\Re^{n\times n}\mid X\ge 0\}$ and define {the} function $h:\Re^{n\times n}\to\Re$ by $h(Z) = \frac{1}{2}\norm{\Pi_C(Z)}^2$.
Note that
\[ \nabla h(Z) = \Pi_{C}(Z) \quad \mbox{and} \quad \norm{\nabla h(Y) - \nabla h(Z)}\le \norm{Y - Z}, \quad \forall\, Y,Z\in\Re^{n\times n}. \]
Given $\hat y \in\Re^{2n}$, the Lipschitz continuity of $\nabla h$ implies that for all $y\in\Re^{2n}$,
\begin{equation*}
\label{eq:liph}
\frac{1}{2}\norm{\Pi_C(\cB^* y + G)}^2
\le \frac{1}{2}\norm{\Pi_C(\cB^* \hat y + G)}^2
+ \inprod{\Pi_C(\cB^* \hat y + G)}{\cB^* (y-\hat y)}
+ \frac{1}{2}\norm{\cB^* (y-\hat y)}^2.
\end{equation*}
From the above inequality, we can derive the following simple upper bound for $\varphi$:
\begin{equation}
\label{eq:apgvarphi}
 \varphi(y) \le \hat \varphi(y; \hat y) : = \varphi(\hat y) + \inprod{\nabla \varphi(\hat y)}{ y - \hat y}
+ \frac{1}{2} \norm{\cB^* y - \cB^*\hat y}^2, \quad \forall\, y\in\Re^{2n}.
\end{equation}
{The APG method we implemented} here is based on \eqref{eq:apgvarphi}.}
The detailed steps of the APG method for solving \eqref{eq:newton_proj_D} are given as follows.

\medskip
\centerline{\fbox{\parbox{\textwidth}{
			{\bf Algorithm APG: {\bf An  accelerated proximal gradient algorithm for \eqref{eq:newton_proj_D}.}}
			\\[5pt]
			Given  $y^0\in \Range(\cB)$,
			set  $z^1 = y^0$ and
			$t_1=1 $.
			For $j = 1,\ldots,$ perform the following steps in each iteration:
			\begin{description}				
				\item[Step 1.] Compute $\nabla \varphi(z^j) = \cB \Pi_{C}(\cB^* z^j + G) - b$. Then
				compute
			      $$
				y^j = {\rm argmin}\left\{ \hat \varphi(y; z^j) \, \mid y\in \Range(\cB) \right\}
				$$
		       via solving the following linear system:
		       \begin{equation}\label{eq:apg_linsys}
		       \cB\cB^* y = \cB\cB^* z^j - \nabla\varphi(z^j),\quad y\in\Range(\cB).
		       \end{equation}
				\item [Step 2.] Set $t_{j+1} = \frac{1+\sqrt{1+4t_j^2}}{2}$, $\beta_j=\frac{t_j-1}{t_{j+1}}$.
				Compute
				$	z^{j+1} = y^j + \beta_j (y^j - y^{j-1}).$
			\end{description}
}}}

\medskip
Note that since $b\in\Range(\cB)$, the solution $y^j\in \Range(\cB)$ to equation \eqref{eq:apg_linsys} is in fact unique. Hence,  Algorithm APG is well defined. In our implementation, we further use the restarting technique to accelerate the convergence of
the algorithm.

 In our numerical experiments, we measure the accuracy of an approximate optimal solution $(X, y)$ for problem \eqref{eq:newton_proj_P} and its dual problem \eqref{eq:newton_proj_D} by using the following relative KKT residual:
\[\eta = \max\{ \eta_P, \eta_{C}\}, \]
where
\[ \eta_P = \frac{\norm{\cB X - b}}{1 + \norm{b}}, \quad
\eta_{C} = \frac{\norm{X - \Pi_{C}(\cB^*y +G)}}{1+\norm{X}}. \]
We note that for the Gurobi solver, the primal infeasibility $\eta_P$ {associated with} the computed approximate solution  is
 usually
very small.
On the other hand,   for  Algorithm {\sc Ssncg1} and Algorithm APG,  since the solution $X$ is obtained through the dual approach, i.e., $X = \Pi_{C}(\cB^*y + G)$, we have that
for these two algorithms, $\eta_C = 0.$

Let $\varepsilon >0$ be a given tolerance. We terminate both algorithms {\sc Ssncg1} and APG
when
$\eta < \varepsilon$.
The algorithms will also be stopped when they reach the maximum number
of iterations (1,000 iterations for {\sc Ssncg1} and 20,000 iterations for
APG) or the maximum computation time of 3 hours.
For the Gurobi solver, we use the default parameter settings, i.e., using the default stopping tolerance and all 12
computing cores.

In this subsection, we test  17 instances of the given matrix $G$
for \eqref{eq:newton_proj_P}
with dimensions $n$ ranging from $10^3$ to $3.2\times 10^4$. Among these test instances,
6 of them are similarity matrices derived from the LIBSVM datasets \cite{chang2011library}: {\bf gisette}, {\bf mushrooms}, {\bf a6a}, {\bf a7a},
{\bf rcv1} and {\bf a8a}.
Similarly as in \cite{wang2010learning}, we first normalize each data point
to have a unit $l_2$-norm and use the following Gaussian kernel to generate $G$, i.e.,
\[ G_{ij} = \exp\left( -\norm{x_i - x_j}^2
\right),\quad \forall\, 1 \le i,j \le n.\]
The other 11 instances are randomly generated using the {\sc Matlab} command:
{\tt G = randn(n)}.

\begin{footnotesize}
	\begin{longtable}{| c c| r |  r | r|}
		\caption{The performance of {\sc Ssncg1}, APG, and Gurobi on the  projection problem \eqref{eq:newton_proj_P} and its dual  \eqref{eq:newton_proj_D}. In the table, ``a'' and ``c1'' stand for APG and {\sc Ssncg1} with the tolerance $\varepsilon = 10^{-9}$; ``b'' stands for Gurobi; ``c2'' stands for {\sc Ssncg1} with $\varepsilon = 10^{-15}$. The entry ``*'' indicates out of memory. The computation time is in the format of ``hours:minutes:seconds''.}\label{table:projBP}
		\\
		\hline
		\mc{2}{|c|}{} &\mc{1}{c|}{} &\mc{1}{c|}{}&\mc{1}{c|}{}\\[-5pt]
		\mc{2}{|c|}{} & \mc{1}{c|}{iter}  &\mc{1}{c|}{$\eta$} &\mc{1}{c|}{time}\\[2pt] \hline
		\mc{1}{|c}{problem} &\mc{1}{c|}{$n$}&\mc{1}{c|}{a $|$ b $|$ c1 $|$ c2}&\mc{1}{c|}{a $|$ b($\eta_P$) $|$ c1 $|$ c2 }
		&\mc{1}{c|}{a $|$ b $|$ c1 $|$ c2}\\ \hline
		\endhead
	rand1
	&  1000	 &1350  $|$  15  $|$  12  $|$  13 	 &   9.7-10 $|$    {\bf 4.1-5} (1.2-15)  $|$    8.7-12 $|$    5.2-16	 &18 $|$ 06 $|$ 01 $|$ 01\\[2pt]
	\hline
	rand2
	&  2000	 &2630  $|$  17  $|$  13  $|$  14 	 &   9.9-10 $|$    {\bf 2.4-5} (1.5-15)  $|$    4.4-12 $|$    4.5-16	 &2:21 $|$ 31 $|$ 02 $|$ 02\\[2pt]
	\hline
	rand3
	&  4000	 &3544  $|$  21  $|$  14  $|$  15 	 &   9.9-10 $|$    {\bf 8.4-6} (2.5-15)  $|$    2.2-13 $|$    4.1-16	 &12:31 $|$ 2:30 $|$ 07 $|$ 08\\[2pt]
	\hline
	rand4
	&  8000	 &6454  $|$  25  $|$  14  $|$  16 	 &   9.9-10 $|$    {\bf 2.3-6}  (1.8-14)  $|$    4.2-10 $|$    4.3-16	 &  1:30:05 $|$ 13:02 $|$ 27 $|$ 34\\[2pt]
	\hline
	rand5
	&  10000	 &8234  $|$  25  $|$  14  $|$  16 	 &   {\bf 3.1-7} $|$    {\bf 6.2-6} (3.4-15)  $|$    8.6-11 $|$    4.5-16	 &  3:00:00 $|$ 21:27 $|$ 44 $|$ 58\\[2pt]
	\hline
	rand6
	&  12000	 &5565  $|$  25  $|$  15  $|$  17 	 &   {\bf 2.6-4} $|$    {\bf 4.6-6} (3.8-15)  $|$    2.0-11 $|$    4.6-16	 &  3:00:00 $|$ 33:31 $|$ 1:14 $|$ 1:33\\[2pt]
	\hline
	rand7
	&  16000	 &3061  $|$  *  $|$  15  $|$  16 	 &   {\bf 1.6-3} $|$  \hspace{0.95cm}*\hspace{1cm}
	$|$    3.7-12 $|$    5.9-16	 &  3:00:02 $|$  \quad*\quad
	  $|$ 2:26 $|$ 2:55\\[2pt]
	\hline
	rand8
	&  20000	 &1646  $|$  *  $|$  16  $|$  17 	 &   {\bf 6.8-3} $|$  \hspace{0.95cm}*\hspace{1cm}
	 $|$    1.7-11 $|$    9.5-16	 &  3:00:07 $|$  \quad*\quad
	   $|$ 4:08 $|$ 4:45\\[2pt]
	\hline
	rand9
	&  24000	 &1014  $|$  *  $|$  16  $|$  17 	 &   {\bf 1.9-2} $|$  \hspace{0.95cm}*\hspace{1cm}
	$|$    2.9-13 $|$    4.9-16	 &  3:00:04 $|$  \quad*\quad
	  $|$ 6:14 $|$ 7:15\\[2pt]
	\hline
	rand10
	&  30000	 &622  $|$  *  $|$  16  $|$  17 	 &   {\bf 4.9-2} $|$  \hspace{0.95cm}*\hspace{1cm}
	$|$    3.8-12 $|$    5.9-16	 &  3:00:14 $|$  \;\;*\;\;
	  $|$ 9:53 $|$ 12:01\\[2pt]
	\hline
	rand11
	&  32000	 &559  $|$  *  $|$  16  $|$  18 	 &   {\bf 6.3-2} $|$  \hspace{0.95cm}*\hspace{1cm}
	$|$    2.0-11 $|$    4.8-16	 &  3:00:17 $|$  \;*\;
	  $|$ 11:57 $|$ 14:10\\[2pt]
	\hline
	gisette
	&  6000	 &928  $|$  24  $|$  11  $|$  12 	 &   9.8-10 $|$    {\bf 3.3-6} (2.5-15)  $|$    9.1-12 $|$    6.5-16	 &7:19 $|$ 6:58 $|$ 14 $|$ 16\\[2pt]
	\hline
	mushrooms
	&  8124	 &763  $|$  20  $|$  11  $|$  13 	 &   9.8-10 $|$    {\bf 9.5-5} (4.8-15)  $|$    2.8-10 $|$    1.9-16	 &11:07 $|$ 11:58 $|$ 27 $|$ 32\\[2pt]
	\hline
	a6a
	&  11220	 &1227  $|$  26  $|$  13  $|$  14 	 &   9.9-10 $|$    {\bf 4.7-6} (4.0-15)  $|$    5.4-12 $|$    3.8-16	 &34:29 $|$ 31:21 $|$ 59 $|$ 1:03\\[2pt]
	\hline
	a7a
	&  16100	 &1377  $|$  *  $|$  14  $|$  15 	 &   9.9-10 $|$  \hspace{0.95cm}*\hspace{1cm}
	 $|$    7.5-13 $|$    2.9-16	 &  1:28:14 $|$  \quad*\quad
	   $|$ 2:14 $|$ 2:34\\[2pt]
	\hline
	rcv1
	&  20242	 &1583  $|$  *  $|$  17  $|$  18 	 &   {\bf 1.3-6} $|$  \hspace{0.95cm}*\hspace{1cm}
	 $|$    2.0-12 $|$    1.9-16	 &  3:00:03 $|$   \quad*\quad
	 $|$ 4:33 $|$ 5:02\\[2pt]
	\hline
	a8a
	&  22696	 &1330  $|$  *  $|$  14  $|$  16 	 &   {\bf 2.7-4} $|$  \hspace{0.95cm}*\hspace{1cm}
	 $|$    9.7-10 $|$    2.5-16	 &  3:00:03 $|$  \quad*\quad
	   $|$ 5:12 $|$ 6:15\\[2pt]
	\hline

	\end{longtable}
\end{footnotesize}

In Table \ref{table:projBP}, we report the numerical results obtained by
{\sc Ssncg1}, APG and Gurobi in solving various instances of the  projection problem \eqref{eq:newton_proj_P}. Here, we terminate algorithms APG and {\sc Ssncg1} when $\eta < 10^{-9}$.
In order  to further demonstrate the ability of {\sc Ssncg1} in computing highly accurate solutions, we also report the results obtained by {\sc Ssncg1} in solving the instances to the  accuracy of $10^{-15}$.
In the table, the first two columns give the name of problems   and the size of $G$ in \eqref{eq:newton_proj_P}. The number of iterations, the relative KKT residual $\eta$ and computation times (in the format hours:minutes:seconds) are listed in the
last twelve columns. For Gurobi,  we also list the relative primal feasibility $\eta_P$.
As one can observe, although Gurobi can produce a very small $\eta_P$, the corresponding relative KKT residual $\eta$ can only reach the accuracy about $10^{-5}$ to $10^{-6}$.
In other words,  comparing to Gurobi, the solutions produced by {\sc Ssncg1} and APG with the tolerance of $\varepsilon = 10^{-9}$ are already more accurate.

One can also observe from Table \ref{table:projBP} that only our algorithm {\sc Ssncg1} can solve all the
test problems to the required accuracies of $\eta < 10^{-9}$ and $\eta < 10^{-15}$. Indeed, APG can only solve 8 smaller instances out of 17 to the desired accuracy after 3 hours and Gurobi reported out of memory when the size of $G$ is larger than 12,000. Moreover, {\sc Ssncg1} is much faster than APG and Gurobi for all the test instances. For example, for the instance {\bf rand4},  {\sc Ssncg1} is at least 26 times faster than
Gurobi and 180 times faster than APG. In addition, {\sc Ssncg1} can solve
{\bf rand11}, a quadratic programming problem with over 1
 billion variables and nonnegative constraints, to the extremely high accuracy of $5\times 10^{-16}$ in about 14 minutes while APG consumed 3 hours to only produce
a solution with an accuracy of $6\times 10^{-2}$. We also emphasize here that from the accuracy of $10^{-9}$ to the much higher accuracy of $10^{-15}$, {\sc Ssncg1} only needs one or two extra iterations and
consumes insignificant additional time.
The latter observation truly confirmed the power of the quadratic (or at least superlinear)
 convergence property of  Algorithm
{\sc Ssncg1} and the power of exploiting the second order sparsity property of the underlying projection problem
within the algorithm.

{Since the worst-case iteration complexity of APG is only sublinear, it is not surprising that the performance of APG is relatively poor compared to {\sc Ssncg1}. We also note that comparing to small scale problems, APG needs much more iterations to obtain relatively accurate solutions for large scale problems. For example, for the instance {\bf rand6}, APG took 3 hours and 5,565 iterations to only generate a relatively inaccurate solution with an accuracy of $3\times 10^{-4}$. Despite this, for small scale instances (especially the instances {\bf gisette}, {\bf mushrooms} and {\bf a6a}), APG, although much slower than {\sc Ssncg1}, can obtain accurate solutions with computation time comparable {to the powerful commercial solver} Gurobi. Thus, as a first-order method, it is already quite powerful.}

Figure \ref{figure:finite-T} plots the KKT residual $\eta$ against the iteration count of {\sc Ssncg1} for solving the
instance {\bf a8a}. Clearly, our algorithm {\sc Ssncg1} exhibits at least a superlinear convergence behavior when approaching the
optimal solution.
In Figure \ref{figure:dimvstime}, we compare the computational complexities of
{\sc Ssncg1} and Gurobi when used to solve the 17 projection problems in Table \ref{table:projBP}.
It shows that the time  $t$ (in seconds) taken to solve a problem of dimension $n$ is given by
$t = \exp(-16) n^{2.1}$ for {\sc Ssncg1} and $t = \exp(-14) n^{2.3}$  for Gurobi.
 One can further observe that on the average, for a given $n$ in the range
 from $[\exp(6),\exp(11)]$, our algorithm is at least $7n^{0.2}$ times faster than Gurobi.


\begin{footnotesize}
	\begin{longtable}{| c c| c |  c |}
		\caption{The performance of {\sc Ssncg1} and PPROJ on the  projection problems \eqref{eq:newton_proj_P} and its dual \eqref{eq:newton_proj_D}. In the table, ``{pp}'' stands for  PPROJ; ``{c2}'' stands for {\sc Ssncg1} with $\varepsilon = 10^{-15}$. The computation time is in the format of ``hours:minutes:seconds''.}\label{table:projBPHPC}
		\\
		\hline
		\mc{2}{|c|}{} &\mc{1}{c|}{} &\mc{1}{c|}{}\\[-5pt]
		\mc{2}{|c|}{}  &\mc{1}{c|}{$\eta$} &\mc{1}{c|}{time}\\[2pt] \hline
		\mc{1}{|c}{problem} &\mc{1}{c|}{$n$} &\mc{1}{c|}{ pp $|$ c2 }
		&\mc{1}{c|}{pp $|$ c2}\\ \hline
		\endhead
		rand1
		&  1000	 &   {\red{ 7.5-14}} $|$    5.2-16	 &03 $|$ 00\\[2pt]
		\hline
		rand2
		&  2000	 &   {\bf 2.9-12} $|$    4.4-16	 &07 $|$ 02\\[2pt]
		\hline
		rand3
		&  4000	 &   {\bf 1.5-11} $|$    4.0-16	 &32 $|$ 05\\[2pt]
		\hline
		rand4
		&  8000	 &   {\bf 4.0-13} $|$    4.3-16	 &2:32 $|$ 22\\[2pt]
		\hline
		rand5
		&  10000	 &   {\bf 5.8-12} $|$    4.5-16	 &3:45 $|$ 39\\[2pt]
		\hline
		rand6
		&  12000	 &   {\bf 1.6-12} $|$    4.7-16	 &5:31 $|$ 59\\[2pt]
		\hline
		rand7
		&  16000	 &   {\bf 5.8-13} $|$    5.9-16	 &19:34 $|$ 1:20\\[2pt]
		\hline
		rand8
		&  20000	 &   {\bf 4.3-13} $|$    9.5-16	 &34:01 $|$ 2:10\\[2pt]
		\hline
		gisette
		&  6000	 &   {\red{ 1.4-14}} $|$    6.5-16	 &2:39 $|$ 11\\[2pt]
		\hline
		mushrooms
		&  8124	 &   {\bf 6.9-7} $|$    2.0-16	 &  16:22:46 $|$ 21\\[2pt]
		\hline
		a6a
		&  11220	 &   {\red{ 3.3-14}} $|$    3.8-16	 &14:32 $|$ 39\\[2pt]
		\hline
		a7a
		&  16100	 &   {\red{ 3.7-14}} $|$    2.9-16	 &43:53 $|$ 1:21\\[2pt]
		\hline
		rcv1
		&  20242	 &   {\bf 1.3-13} $|$    1.9-16	 &  2:01:32 $|$ 2:19\\[2pt]
		\hline
		\end{longtable}
\end{footnotesize}

\begin{figure}
	\centering
	\subfigure[Convergence behaviour (problem:a8a).]{
	\label{figure:finite-T}
	\includegraphics[width=0.45\textwidth]{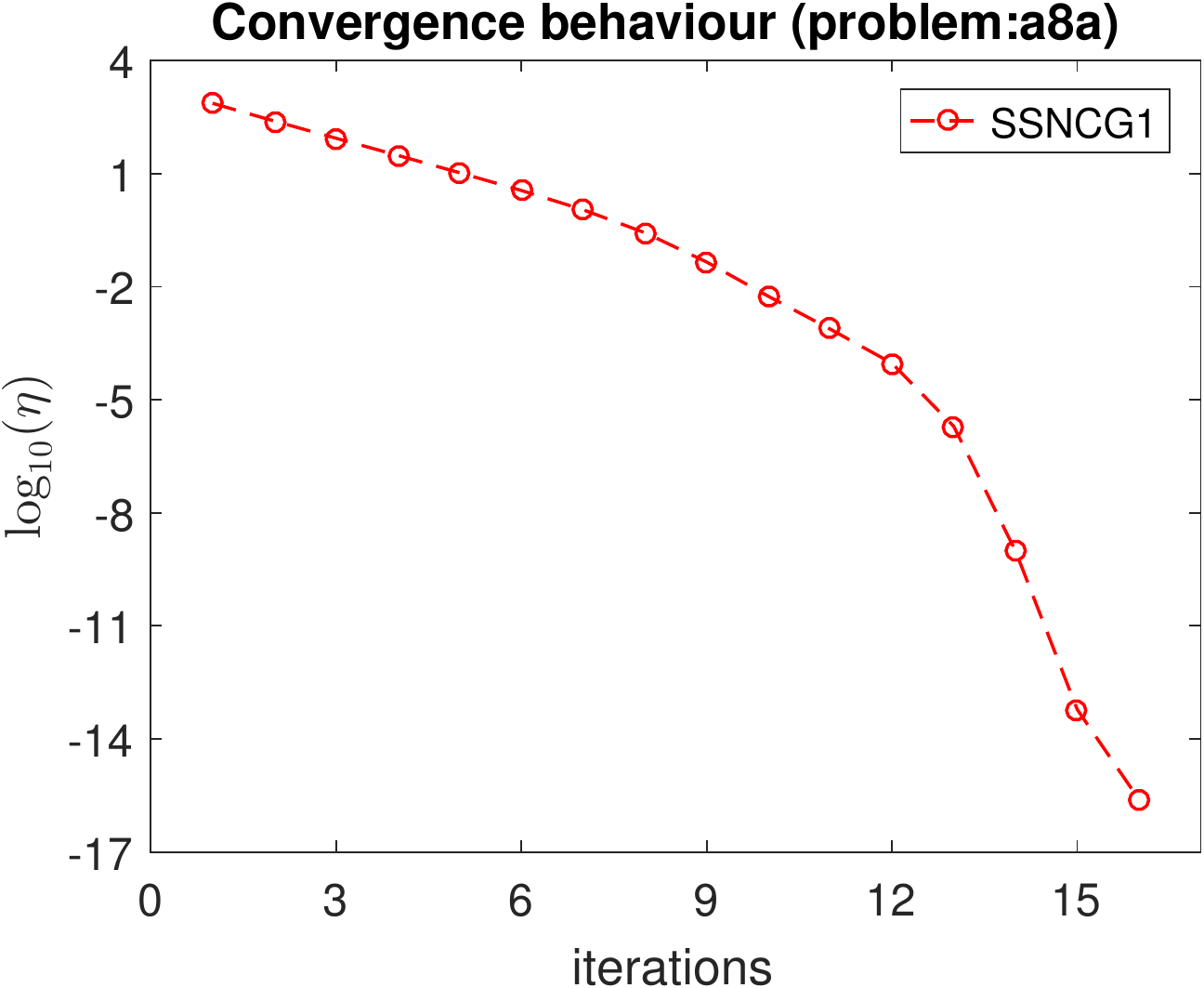}
    }
\hspace{0.5cm}
    \subfigure[{Complexities}: Dimension VS. Time.]{
    	\label{figure:dimvstime}
	\includegraphics[width=0.45\textwidth]{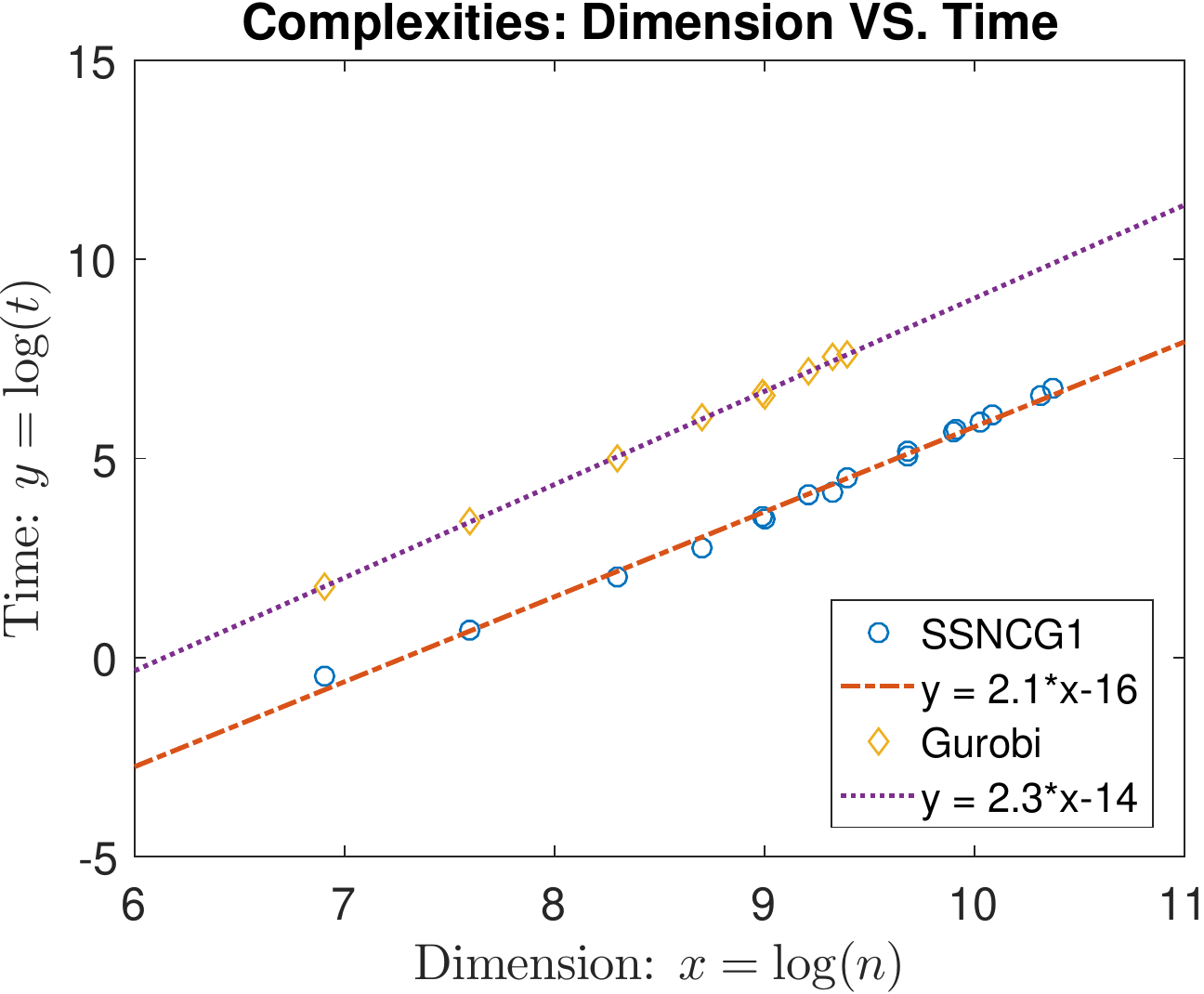}
	}
\caption{Performance {evaluations} of {\sc Ssncg1}.}
\label{figure:1}
\end{figure}

In Table \ref{table:projBPHPC}, we report the detailed results obtained by
our algorithm {\sc Ssncg1} and a recently developed algorithm (called PPROJ) in \cite{zhang2016projection}. PPROJ is an extremely fast implementation of an algorithm which utilizes the sparse reconstruction by separable approximation \cite{wright2009sparse} and the dual active set algorithm (DASA). We used the code downloaded from the authors' homepage\footnote{\url{https://www.math.lsu.edu/~hozhang/Software.html}}. Since PPROJ is implemented in C  and
it depends on some C libraries for linear system solvers, we have to compile PPROJ under the Linux system. Therefore, we compare the performance of {\sc Ssncg1} and PPROJ on the high performance computing (HPC\footnote{\url{https://comcen.nus.edu.sg/services/hpc/about-hpc/}}) cluster at the National University of Singapore. Due to the memory limit imposed for each user, we are only able to test instances with the matrix dimensions less than 21,000.
Default parameter values for PPROJ are used during the experiments. Note that since the stopping criterion of PPROJ is slightly different from ours, we report the accuracy measure $\eta$ corresponding to the solutions obtained by PPROJ.
One can observe that, except the instance {\bf mushrooms}, PPROJ can obtain highly accurate solutions. In fact, we observe from the detailed output file of PPROJ that it does not solve the instance {\bf mushrooms} to the required accuracy while spending excessive amount of time on the DASA in computing  Cholesky factorizations.
One can also observe from Table \ref{table:projBPHPC} that {\sc Ssncg1} is much faster than PPROJ, especially for large scale problems. For example, for the instance {\bf rcv1}, {\sc Ssncg1} is at least 52 times faster than PPROJ. Therefore, one can safely conclude that {\sc Ssncg1} is robust and
{highly} 
efficient for solving projections problems over the Birkhoff polytope. However, we should emphasize here that PPROJ is a solver aiming at computing the projection onto a general polyhedral convex set and does not necessarily fully exploit the specific structure of the Birkhoff polytope.
{In fact, it would be an interesting future research topic to investigate whether
PPROJ can take advantage of both the sparsity of the constraint matrix $B$ and the second order sparsity of the underlying problem to further accelerate its performance.}

\subsection{Numerical results for quadratic programming problems arising from relaxations of QAP problems}

Given matrices $A,B\in \cS^n$, the quadratic assignment problem (QAP) is
given by
$$
\min\{ \inprod{X}{AXB} \mid  X \in\{0,1\}^{n\times n} \cap \mathfrak{B}_n\},
$$
where $\{0,1\}^{n\times n}$ denotes the set of matrices with only $0$ or $1$ entries. It has been shown
in \cite{anstreicher2001new} that a reasonably good lower bound for the above QAP
can often be
obtained by solving the following convex QP  problem:
\begin{eqnarray}\label{eq:qpQAP}
\min \{ \inprod{X}{ \cQ X} \mid X\in \mathfrak{B}_n \},
\end{eqnarray}
where the self-adjoint positive semidefinite linear operator $\cQ$ is defined by
\[ \cQ(X) := AXB - SX - XT,\quad \forall X\in\Re^{n\times n}, \] and $S,T\in\cS^n$ are
given as follows. Consider the eigenvalue decompositions,
$A = V_A D_A V_A^T$, $B=V_B D_B V_B^T$, where
$V_A$ and $D_A={\rm diag}{(\alpha_1,\dots,\alpha_n)}$ correspond to the eigenvectors and eigenvalues of $A$, and $V_B$ and $D_B = {\rm diag}{(\beta_1,\dots,\beta_n)}$
correspond to the eigenvectors and eigenvalues of $B$, respectively.
We assume that $\alpha_1 \geq \ldots \geq \alpha_n$ and
$\beta_1 \leq \ldots \leq \beta_n$.
Let $(\bar{s},\bar{t})$ be an optimal solution to the LP:
$\max \{ e^T s + e^T t\mid s_i + t_j \leq \alpha_i\beta_j,\; i,j=1,\dots,n\}$,
whose solution can be computed analytically as shown in \cite{anstreicher2001new}.
Then $S = V_A{\rm diag}{(\bar{s})}V_A^T$ and $T=V_B {\rm diag}{(\bar{t})} V_B^T$. In our numerical
experiments, the test instances $A$ and $B$ are obtained from the QAP Library \cite{burkard1997qaplib}.
 We measure the accuracy of an approximate optimal solution $X$ for problem \eqref{eq:qpQAP} by using the following relative KKT residual:
\[\eta = \frac{\norm{X - \Pi_{\mathfrak{B}_n}(X - \cQ X)}}{1 + \norm{X} + \norm{\cQ X}}.
\]

Table \ref{table:QAP} reports the performance of the ALM  designed in Section \ref{sec:QP_BP}
against
Gurobi in solving the QP \eqref{eq:qpQAP}.
In the fourth and fifth columns of Table \ref{table:QAP}, ``alm (itersub)'' denotes the
number of outer iterations with itersub in the parenthesis indicating the number of inner iterations
of ALM.
One can observe from Table \ref{table:QAP} that our algorithm is much faster than Gurobi, especially for large scale problems. For example, for the instance {\bf tai150b}, ALM only needs 13 seconds to reach the desired accuracy while Gurobi needs about two and half hours.
One can easily see that Algorithm ALM is highly efficient because
each of its subproblems can be solved by the powerful semismooth Newton-CG algorithm
{\sc Ssncg2} based on the 
efficient computations of $\Pi_{\mathfrak{B}_n}$ and its corresponding HS-Jacobian.
Note that problem \eqref{eq:qpQAP} is in fact a quadratic programming with $n^2$ variables. It is thus not surprising that the interior-point method based solver Gurobi reports out of memory for problem {\bf tai265c} with $n = 256$.

\section{Conclusion}

In this paper, we study the generalized Jacobians in the sense of Han and Sun \cite{han1997newton} of the Euclidean projector over a polyhedral convex set with an emphasis on the Birkhoff polytope. A special element in the set of the generalized Jacobians, referred as the HS-Jacobian,
is successfully constructed. Armed with its simple and explicit formula, we are able to provide a highly efficient procedure to compute the HS-Jacobian. To ensure the efficiency of our procedure, a dual inexact semismooth Newton method is designed and implemented to find the projection over the Birkhoff polytope.
Numerical comparisons between the state-of-the-art solvers Gurobi and PPROJ have convincingly demonstrated the remarkable efficiency and robustness of our algorithm and implementation.
To further demonstrate the importance of the fast computations of the projector and its corresponding HS-Jacobian, we also
incorporate them in the augmented Lagrangian method for solving a class of Birkhoff polytope constrained convex QP problems. Extensive numerical experiments on a collection of QP problems arising from the relaxation of quadratic assignment problems
show the large benefits of our second order nonsmooth analysis based procedure.

\begin{footnotesize}
	\begin{longtable}{| c c| c |  c | c|}
		\caption{The performance of ALM and Gurobi on the quadratic programming problems \eqref{eq:qpQAP}. In the table, ``{gu}'' stands for Gurobi; ``{alm}'' stands for ALM (accuracy $\eta < 10^{-7}$). The entry ``*'' indicates out of memory. The computation time is in the format of ``hours:minutes:seconds''. ``00'' in the time column means less than 0.5 seconds.}\label{table:QAP}
		\\
		\hline
		\mc{2}{|c|}{} &\mc{1}{c|}{} &\mc{1}{c|}{}&\mc{1}{c|}{}\\[-5pt]
		\mc{2}{|c|}{} & \mc{1}{c|}{iter}  &\mc{1}{c|}{$\eta$} &\mc{1}{c|}{time}\\[2pt] \hline
		\mc{1}{|c}{problem} &\mc{1}{c|}{$n$}&\mc{1}{c|}{gu $|$ alm (itersub)}&\mc{1}{c|}{gu $|$ alm}
		&\mc{1}{c|}{gu $|$ alm}\\ \hline
		\endhead
		lipa50a
		&  50	 & 11  $|$ 21  (58) 	 &   {\red{ 1.8-6}} $|$    7.3-8	 &11 $|$ 01\\[2pt]
		\hline
		lipa50b
		&  50	 & 11  $|$ 17  (123) 	 &   {\red{ 2.2-6}} $|$    5.0-8	 &11 $|$ 05\\[2pt]
		\hline
		lipa60a
		&  60	 & 11  $|$ 19  (54) 	 &   {\red{ 1.4-6}} $|$    6.7-8	 &30 $|$ 01\\[2pt]
		\hline
		lipa60b
		&  60	 & 11  $|$ 18  (104) 	 &   {\red{ 1.7-6}} $|$    9.9-8	 &29 $|$ 05\\[2pt]
		\hline
		lipa70a
		&  70	 & 11  $|$ 19  (52) 	 &   {\red{ 1.7-6}} $|$    6.0-8	 &1:17 $|$ 01\\[2pt]
		\hline
		lipa70b
		&  70	 & 11  $|$ 19  (103) 	 &   {\red{ 1.4-6}} $|$    6.0-8	 &1:20 $|$ 06\\[2pt]
		\hline
		lipa80a
		&  80	 & 11  $|$ 25  (68) 	 &   {\red{ 1.3-6}} $|$    7.3-8	 &2:46 $|$ 01\\[2pt]
		\hline
		lipa80b
		&  80	 & 12  $|$ 18  (141) 	 &   {\blue{ 6.3-7}} $|$    9.3-8	 &2:52 $|$ 14\\[2pt]
		\hline
		lipa90a
		&  90	 & 11  $|$ 20  (54) 	 &   {\red{ 2.7-6}} $|$    8.8-8	 &5:32 $|$ 01\\[2pt]
		\hline
		lipa90b
		&  90	 & 12  $|$ 19  (134) 	 &   {\blue{ 5.5-7}} $|$    2.5-8	 &5:46 $|$ 15\\[2pt]
		\hline
		sko100a
		&  100	 & 14  $|$ 26  (95) 	 &   {\red{ 8.5-6}} $|$    8.5-8	 &2:06 $|$ 11\\[2pt]
		\hline
		sko100b
		&  100	 & 14  $|$ 27  (93) 	 &   {\red{ 8.3-6}} $|$    7.9-8	 &2:06 $|$ 10\\[2pt]
		\hline
		sko100c
		&  100	 & 15  $|$ 27  (93) 	 &   {\red{ 4.5-6}} $|$    9.0-8	 &2:11 $|$ 11\\[2pt]
		\hline
		sko100d
		&  100	 & 15  $|$ 26  (91) 	 &   {\red{ 4.8-6}} $|$    8.8-8	 &2:06 $|$ 10\\[2pt]
		\hline
		sko100e
		&  100	 & 14  $|$ 27  (98) 	 &   {\red{ 5.8-6}} $|$    8.5-8	 &2:06 $|$ 11\\[2pt]
		\hline
		sko100f
		&  100	 & 16  $|$ 27  (93) 	 &   {\red{ 6.1-6}} $|$    9.6-8	 &2:15 $|$ 09\\[2pt]
		\hline
		sko64
		&  64	 & 13  $|$ 27  (91) 	 &   {\red{ 7.3-6}} $|$    9.0-8	 &13 $|$ 04\\[2pt]
		\hline
		sko72
		&  72	 & 13  $|$ 26  (86) 	 &   {\red{ 8.1-6}} $|$    7.6-8	 &22 $|$ 04\\[2pt]
		\hline
		sko81
		&  81	 & 14  $|$ 26  (89) 	 &   {\red{ 4.4-6}} $|$    7.6-8	 &43 $|$ 06\\[2pt]
		\hline
		sko90
		&  90	 & 14  $|$ 26  (95) 	 &   {\red{ 4.4-6}} $|$    7.8-8	 &43 $|$ 08\\[2pt]
		\hline
		tai100a
		&  100	 & 11  $|$ 18  (52) 	 &   {\red{ 1.3-6}} $|$    9.5-8	 &10:31 $|$ 02\\[2pt]
		\hline
		tai100b
		&  100	 & 11  $|$ 27  (98) 	 &   {\red{ 1.3-6}} $|$    9.1-8	 &10:31 $|$ 13\\[2pt]
		\hline
		tai50a
		&  50	 & 11  $|$ 20  (55) 	 &   {\red{ 1.1-6}} $|$    6.1-8	 &09 $|$ 01\\[2pt]
		\hline
		tai50b
		&  50	 & 13  $|$ 25  (89) 	 &   {\red{ 5.9-6}} $|$    8.5-8	 &10 $|$ 03\\[2pt]
		\hline
		tai60a
		&  60	 & 10  $|$ 19  (54) 	 &   {\red{ 5.4-6}} $|$    9.6-8	 &27 $|$ 01\\[2pt]
		\hline
		tai60b
		&  60	 & 10  $|$ 28  (102) 	 &   {\red{ 5.4-6}} $|$    6.6-8	 &27 $|$ 06\\[2pt]
		\hline
		tai80a
		&  80	 & 11  $|$ 21  (59) 	 &   {\red{ 1.2-6}} $|$    7.9-8	 &2:36 $|$ 01\\[2pt]
		\hline
		tai80b
		&  80	 & 11  $|$ 27  (98) 	 &   {\red{ 1.2-6}} $|$    8.5-8	 &2:36 $|$ 07\\[2pt]
		\hline
		tai256c
		&  256	 & *  $|$  2  ( 4) 	 & *  $|$    2.1-16	 & *  $|$ 00\\[2pt]
		\hline
		tai150b
		&  150	 & 19  $|$ 27  (94) 	 &   {\green{ 4.3-7}} $|$    9.3-8	 &  2:46:17 $|$ 13\\[2pt]
		\hline
		tho150
		&  150	 & 16  $|$ 24  (96) 	 &   {\red{ 5.6-6}} $|$    9.9-8	 &18:52 $|$ 22\\[2pt]
		\hline
		wil100
		&  100	 & 13  $|$ 25  (82) 	 &   {\red{ 9.1-6}} $|$    8.8-8	 &2:14 $|$ 07\\[2pt]
		\hline
		wil50
		&  50	 & 13  $|$ 29  (99) 	 &   {\red{ 3.9-6}} $|$    8.4-8	 &05 $|$ 03\\[2pt]
		\hline
		esc128
		&  128	 & 17  $|$  2  ( 4) 	 &   8.2-11 $|$    2.2-16	 &09 $|$ 00\\[2pt]
		\hline
		
	\end{longtable}
\end{footnotesize}

\section*{Acknowledgment}
{We would like to thank Professor Jong-Shi Pang at   University of Southern California for his helpful comments on an early version of this paper and the  referees for helpful suggestions to  improve the quality of this paper.}


\end{document}